\newcommand{\N}{\mathbb{N}}
\newcommand{\RR}{\mathbb{R}}
\newcommand{\iprods}[1]{\langle#1\rangle}
\newcommand{\Fc}{\mathcal{F}}
\newcommand{\Set}[1]{\left\{#1\right\}}
\newcommand{\dom}[1]{\mathrm{dom}\, #1}
\newcommand{\norms}[1]{\Vert#1\Vert}
\newcommand{\inte}[1]{\mathrm{int}\, #1}
\newcommand{\ri}[1]{\mathrm{ri}\, #1}
\newcommand{\iF}{\iota_{\Fc}}
\newcommand{\norm}[1]{\left\Vert#1\right\Vert}
\newcommand{\vertiii}[1]{{\left\vert\kern-0.25ex\left\vert\kern-0.25ex\left\vert #1
    \right\vert\kern-0.25ex\right\vert\kern-0.25ex\right\vert}}
\newtheorem{theorem}{Theorem}[section]
\theoremstyle{definition}
\newtheorem{lemma}{Lemma}[section]
\newtheorem{proposition}{Proposition}[section]
\theoremstyle{remark}
\newtheorem{remark}{Remark}[section]
\newtheorem{assumption}{Assumption}
\crefname{assumption}{Assumption}{Assumptions}
\Crefname{enumi}{}{}
\setlist[enumerate,1]{label=(\roman*)}
\algrenewcommand\algorithmicrequire{\textbf{Input:}}
\algrenewcommand\algorithmicensure{\textbf{Output:}}
\begin{document}

\title{The Boosted DC Algorithm for \\ linearly constrained DC programming}

\author{F. J. Arag\'on Artacho\footnote{Department of Mathematics, University of Alicante, Alicante, Spain. \newline\indent{~~} Email: francisco.aragon@ua.es}
\and
	R. Campoy\footnote{Department of Statistics and Operational Research, Universitat de Val\`{e}ncia, Valencia, Spain. \newline\indent{~~} Email: \texttt{ruben.campoy@uv.es}}
\and	
 P. T. Vuong\footnote{Mathematical Sciences School, University of Southampton, UK \newline\indent{~~} Email: \texttt{t.v.phan@soton.ac.uk}}
 }

\date{\today}

\maketitle

\begin{center}
Dedicated to Professor Miguel A. Goberna on the occasion of his 70th birthday
\end{center}

\begin{abstract}
	The Boosted Difference of Convex functions Algorithm (BDCA) has been recently introduced to accelerate the performance of the classical Difference of Convex functions Algorithm (DCA).
	This acceleration is achieved thanks to an extrapolation step from the point computed by DCA via a line search procedure.
	In this work, we propose an extension of BDCA that can be applied to difference of convex functions programs with linear constraints, and
    prove that every cluster point of the sequence generated by this algorithm is a Karush--Kuhn-Tucker point of the problem if the feasible set has a Slater point. When the objective function is quadratic, we prove that any sequence generated by the algorithm is bounded and R-linearly (geometrically) convergent.
	Finally, we present some numerical experiments where we compare the performance of DCA and BDCA on some challenging problems: to test the copositivity of a given matrix, to solve one-norm and infinity-norm trust-region subproblems, and to solve piecewise quadratic problems with box constraints. Our numerical results demonstrate that this new extension of BDCA outperforms DCA.
\end{abstract}

{\small
\noindent{\bf Keywords:}
Difference of convex functions;  boosted difference of convex functions algorithm; global convergence; constrained DC programs; copositivity problem; trust region subproblem.}

\section{Introduction}\label{sec:intro}
In this paper, we are interested in solving the following DC (difference of convex functions) optimization problem:
\begin{equation}\label{eq:P}\tag{$\mathcal{P}$}
\left\{\begin{array}{ll}
\displaystyle\min_{x\in\RR^n}& \phi(x) := g(x) - h(x) \vspace{1ex}\\
\text{s.t.} &\iprods{a_i, x}\leq b_i,~i=1,\ldots,p,
\end{array}\right.
\end{equation}
where $g:\RR^n\to\RR\cup\{+\infty\}$, $h:\RR^n\to\RR\cup\{+\infty\}$  are proper, closed, and convex functions with $g$ being smooth, $a_i\in\RR^n$, $b_i\in\RR$ for $i=1,\ldots,p$, and $\iprods{\cdot,\cdot}$ denotes an inner product. We use the conventions:
\begin{gather*}
(+\infty)-(+\infty)=+\infty,\\
(+\infty)-\lambda=+\infty\quad
\text{and}\quad\lambda-(+\infty)=-\infty,\quad\forall\lambda\in{]-\infty,+\infty[}.
\end{gather*}

Observe that we can rewrite problem~\eqref{eq:P} as an unconstrained nonsmooth DC optimization problem, whose objective function is $g + \iota_{\Fc} - h$, where $\iota_{\Fc}$ denotes the indicator function of the feasible set
$$\Fc := \Set{x\in\RR^n \mid \iprods{a_i, x}\leq b_i,\, i=1,\ldots,p}.$$
For solving this problem, one can apply the classical DC Algorithm (DCA) in \cite{TT97,An2018}.
DC~programming and the DCA have been developed and studied for more than 30 years~\cite{An2018}.
The DCA has been successfully applied in different fields such as machine learning, financial optimization, supply chain management, and telecommunication, see, e.g. \cite{tao2005dc,Geremew2018, Nam2018}.
Nowadays, DCA has become a useful method to solve nonconvex problems.

To accelerate the convergence of DCA, which can be slow for some problems, a new method called \textit{Boosted DC Algorithm} (BDCA) has been recently proposed in \cite{BDCA2018,nBDCA}.
The key idea of BDCA is to perform an extrapolation step via a line search procedure at the point computed by DCA at each iteration.
This step allows the algorithm to take longer steps than the classical DCA, achieving in this way a larger reduction of the objective value per iteration.
In addition to accelerating its convergence, BDCA may have better chances to escape from bad local optima thanks to the line search procedure, see~\cite[Example~3.3]{nBDCA}.
Therefore, BDCA is not only faster than DCA but also can provide better solutions.
Extensive numerical experiments in diverse applications such as  biochemistry \cite{BDCA2018}, machine learning~\cite{XuXue2017,ML2} and data science \cite{nBDCA} have been performed where BDCA clearly outperforms DCA. Note that all these applications are modeled as DC problems with $g$ smooth.
However, it is important to emphasize that, for unconstrained DC programs, the BDCA proposed in~\cite{BDCA2018,nBDCA} is not applicable when the function $g$ in \eqref{eq:P} is nonsmooth (see~\cite[Example~3.4]{nBDCA}).

The aim of this paper is to show that BDCA can still be applied if the nonsmooth function $g$ is the sum of a smooth convex function and the indicator function of a polyhedral set.
More precisely, we will show that  it is possible to use BDCA for solving DC programs with linear constraints of the form \eqref{eq:P}.
To compare the performance of DCA and BDCA, we provide numerical experiments on various challenging problems: to test the copositivity of a given matrix, to solve $\ell_1$ and $\ell_\infty$ trust-region subproblems, and to find the minimum of a piecewise quadratic function with box constraints.
The three first problems have a quadratic objective function and are known to be NP-hard~\cite{Murty} (the first was already heuristically investigated in \cite{Durr} using DCA), while the last problem has a nonsmooth objective function. Our results confirm that BDCA significantly outperforms DCA in these applications.

The rest of this paper is organized as follows.
\Cref{sec:pre_results} recalls some preliminary results.
In Section~\ref{sec:BDCA}, we propose a new variant of BDCA for solving \eqref{eq:P} and prove that the objective value of the sequence generated by the algorithm is monotonically decreasing, and that any limit point of the sequence is a KKT point of the problem. The R-linear convergence of any sequence generated by BDCA in the special case of quadratic objective functions is derived in \Cref{sec:linearRate}. In \Cref{sec:application1}, we provide some numerical experiments for testing the copositivity of a given matrix, for solving $\ell_1$ and $\ell_\infty$ trust-region subproblems and {for solving piecewise quadratic} problems with box constraints, where we compare BDCA and DCA. Finally, some conclusions and future research are briefly discussed in~\Cref{sec:conclusions}.

\section{Preliminaries}\label{sec:pre_results}
In this section, we state our assumptions imposed on \eqref{eq:P}.
We also recall some preliminary and basic results which will be used in the sequel.

Let {$f:\RR^n\to\RR\cup\Set{+\infty}$} be a proper extended real-valued convex function. The set $\dom{f} := \Set{x\in\RR^n \mid f(x) < +\infty}$ denotes its (effective) \emph{domain}, and $$\partial{f}(x) := \Set{ w\in\RR^n \mid f(y) \geq f(x) + \iprods{w,y-x},~\forall y\in\RR^n}$$ denotes the \emph{subdifferential} of $f$ at $x$. If $f$ is differentiable at $x$, then $\partial{f}(x)=\{\nabla f(x)\}$, where $\nabla f(x)$ stands for the \emph{gradient} of $f$ at $x$. The one-side \emph{directional derivative} of $f$ at $x$ with respect to the direction $d\in\RR^n$ is
$$f'(x;d):=\lim_{t\searrow 0}\frac{f(x+td)-f(x)}{t}.$$
Recall that $f$ is said to be \emph{strongly convex} with \emph{strong convexity parameter} $\rho>0$ if the function $f-\frac{\rho}{2}\norms{\cdot}^2$ is convex. The \emph{conjugate} of $f$ is the function $f^\ast:\RR^n\to\RR\cup\Set{+\infty}$ defined at $u\in\RR^n$ as
$$f^\ast(u)=\sup_{x\in\RR^n}\{\langle x,u\rangle -f(x)\}.$$
It holds that $f^{\ast\ast}:=(f^\ast)^\ast=f$. Moreover, for all $x,u\in\RR^n$,
$$u\in\partial f(x) \quad\Leftrightarrow\quad f(x)+f^\ast(u)=\langle x, u\rangle \quad\Leftrightarrow\quad x\in\partial f^\ast(u).$$
Given a set $C\subseteq\RR^n$, its \emph{interior}, its relative interior and the \emph{convex cone generated} by $C$ are denoted by $\inte C$, $\ri C$ and $ \mathop{\rm cone} C$, respectively. The function
$$\iota_C(x):=\left\{\begin{array}{ll}
0, &\text{if } x\in C,\\
+\infty, & \text{otherwise;}
\end{array}\right.$$
denotes the \emph{indicator function} of $C$, and $C$ is convex if and only if $\iota_C$ is so. Moreover, its subdifferential $\partial\iota_C$ is the \emph{normal cone} to $C$, which is given by
$$N_C(x):=\left\{\begin{array}{ll}
\{u\in\RR^n : \langle u, c-x \rangle\leq 0, \quad \forall c\in C \}, &\text{if }x\in C,\\
\emptyset, & \text{otherwise.}
\end{array}\right.$$

For proving our convergence results, we will make use of the following assumptions.

\begin{assumption}\label{as:A1}
Both $g$ and $h$ are strongly convex on their domain with the same strong convexity parameter $\rho >0$.
\end{assumption}

\begin{assumption}\label{as:A2}
	The function $h$ is subdifferentiable at every point in $\dom{h}$; i.e., $\partial h(x)\neq\emptyset$ for all $x\in\dom{h}$.
	The function $g$ is continuously differentiable on an open set containing $\dom{h}$  and
	\begin{equation}\label{eq:phi_bounded_below}
	\inf_{x\in\Fc}\phi(x) > -\infty.
	\end{equation}
\end{assumption}

\begin{assumption}\label{as:A3}
The feasible set $\mathcal{F}$ has a Slater point, that is, there exists $\hat{x}\in \RR^n$ such that $\langle a_i,\hat{x}\rangle <b_i$, for all $i=1,\ldots,p$.
\end{assumption}

\begin{remark}
Assumption~\ref{as:A1} is not restrictive, in the sense that any DC decomposition of $\phi$ as $\phi = g - h$, can be expressed as $\phi = (g+\frac{\rho}{2}\norms{\cdot}^2) - (h+\frac{\rho}{2}\norms{\cdot}^2)$ for any $\rho>0$.
Observe that $\partial h(x)\neq\emptyset$ holds for all $x\in\ri\dom{h}$ (by \cite[Theorem~23.4]{RT}), so the first part of \Cref{as:A2} is clearly satisfied if $\dom{h}=\RR^n$. A key point of our method is the smoothness of $g$ in Assumption~\ref{as:A2}, which cannot be in general omitted (see~\cite[Example~3.4]{nBDCA}).
\end{remark}

Associated with problem \eqref{eq:P}, we can construct its \emph{dual} as
\begin{equation}\label{eq:D}\tag{$\mathcal{D}$}
\displaystyle\inf_{u\in\RR^n}\, h^\ast(u)-(g+\iF)^\ast(u),
\end{equation}
which is also a DC program with the same optimal value. Indeed,
\begin{align*}
\inf_{u\in\RR^n}\{h^\ast(u)-(g+\iF)^\ast(u)\}&=\inf_{u\in\RR^n}\{h^\ast(u)-\sup_{x\in\RR^n}\{\langle u,x\rangle - g(x)-\iF(x)\}\}\\
&=\inf_{x\in\RR^n}\{g(x)+\iota_F(x)-\sup_{u\in\RR^n}\{\langle u, x\rangle -h^\ast(u)\}\}\\
&=\inf_{x\in\RR^n}\{g(x)+\iota_F(x)-h(x)\}\\
&=\inf_{x\in\Fc}\{g(x)-h(x)\}=\inf_{x\in\Fc}\phi(x).
\end{align*}
We say that $\overline{x}\in \RR^n$ (resp. $\overline{u}\in \RR^n$) is a \emph{critical point }of~\eqref{eq:P} (resp.~\eqref{eq:D}) if $\nabla g(\overline{x})\in\partial (h+\iF)(\overline{x})$ (resp. $\partial h^\ast(\overline{u})\cap\partial (g+\iF)^\ast(\overline{u})\neq\emptyset$). Recall from \cite[Theorem 5.19]{Mord} that $\overline{x}$ is called a \emph{KKT point} of~\eqref{eq:P} if there exist $\mu_1,\mu_2,\ldots,\mu_p \in \RR$ such that
\begin{equation}\label{KKT}
\left\{\begin{array}{l}
0 \in \nabla{g}(\overline{x}) -\partial{h}(\overline{x}) +\sum_{i=1}^p\mu_i a_i, \vspace{1ex}\\
0 = \mu_i (\iprods{a_i,\overline{x}}-b_i),~~i=1,\ldots,p,\vspace{1ex}\\
\mu_i\geq 0, ~~\iprods{a_i,\overline{x}}\leq b_i,~~i=1,\ldots,p.
\end{array}\right.
\end{equation}
The DC algorithm is a primal-dual method in the sense that it is aimed to find solutions for both \eqref{eq:P} and \eqref{eq:D}, simultaneously. Our goal then is to design a BDCA variant that allow us to find KKT points of~\eqref{eq:P} and critical points of~\eqref{eq:D}.

To finish this section, we need to introduce the following notions regarding the geometry of the feasible set $\Fc$. The  \emph{cone of feasible directions} at $\overline{x}\in \Fc$ is denoted by
\begin{equation*}
D( \overline{x}) :=\Set{ d\in \RR^{n} \mid \exists
\varepsilon >0~\text{such that}~\overline{x}+td\in \Fc,~\forall t\in \left[ 0,\varepsilon \right] }.
\end{equation*}
Due to the polyhedral structure of $\Fc$, its normal cone coincides with the \emph{active cone}; i.e.,
\begin{equation*}
N_\Fc(\overline{x})=A(\overline{x}) := \mathop{\rm cone}\Set{ a_i, ~~i\in I(\overline{x})}, \quad \text{for } \overline{x}\in\Fc,
\end{equation*}
where $I(\overline{x})$ stands for the set of \emph{active constraints} at $\overline{x}$, i.e., $$I(\overline{x})=\{i\in\{1,\ldots,p\}\mid \langle a_i,\overline{x}\rangle = b_i\}.$$
Since we deal with affine constraints, we have (see e.g. \cite[Proposition~4.14]{book})
\begin{equation}\label{eq:feas_dir}
D(\overline{x})=\Set{ d \in\RR^n \mid \iprods{a_i, d} \leq 0,~ i\in I(\overline{x})}.
\end{equation}

\section{The Boosted DC Algorithm and its convergence}\label{sec:BDCA}
For solving \eqref{eq:P}, we propose the following method, \Cref{alg:BDCA}, which can tackle more general problems than the Boosted DC Algorithm proposed in \cite{nBDCA}.\\

\begin{algorithm}[H]\caption{BDCA (Boosted DC Algorithm) for solving \eqref{eq:P}}\label{alg:BDCA}
\begin{algorithmic}[1]
\Require{An initial point $x_{0}\in \Fc$ and two parameters $\alpha>0$ and $\beta\in{]0,1[}$;}
\State $k\leftarrow 0$;
\State Select $u_k \in \partial h(x_k)$ and compute the unique solution $y_k$ of
\begin{equation}\label{eq:sub_prob}
\tag{$\mathcal{P}_{k}$} ~
\left\{\begin{array}{ll}
\displaystyle\min_{x\in\RR^n} &\phi_k(x) := g(x) - \iprods{u_k, x} \vspace{1ex}\\
\textrm{s.t.}& \iprods{a_i, x} \leq b_i,~i=1,\ldots,p.
\end{array}\right.
\end{equation}%
\State $d_{k}\leftarrow y_{k}-x_{k}$;

\If{$d_{k}=0$}
\State \textbf{stop} and \textbf{return} $x_{k}$;
\EndIf

\If{$I(y_k)\subseteq I(x_k)$}
\State Choose any $\overline{\lambda}_k\geq 0$, set $\lambda_k\leftarrow\overline{\lambda}_k$, and reduce $\lambda_k$ until $y_{k}+\lambda_{k}d_{k}\in \Fc$;
\While{$\phi(y_{k}+\lambda_{k}d_{k})>\phi(y_{k})-\alpha\lambda_{k}^2\|d_{k}\|^{2}$}
\State $\lambda_{k}\leftarrow\beta\lambda_{k}$;
\EndWhile
\Else
\State $\lambda_k\leftarrow 0$;
\EndIf
\State $x_{k+1}\leftarrow y_{k}+\lambda_{k}d_{k}$;
\State $k\leftarrow k+1$ and \textbf{go to} Line~2;
\end{algorithmic}
\end{algorithm}\bigskip

Let us make some comments on \Cref{alg:BDCA}.
\begin{itemize}
\item[(i)] Lines 1 to 6 of \Cref{alg:BDCA} correspond to the classical DCA for solving \eqref{eq:P}. Thus, when $\overline{\lambda}_k=0$ for all $k$, \Cref{alg:BDCA} coincides with DCA.
\item[(ii)] Lines 7 to 15 present the boosting step. It first checks if $d_k$ is a feasible direction at $y_k\in\Fc$. If so, it then performs a line search step along the direction $d_k$ which maintains feasibility to improve the objective value $\phi$. Otherwise, the boosting step is skipped and  we simply use the DCA~point~$y_k$.

\item[(iii)] In terms of per-iteration complexity, the boosting step requires to check the feasibility of direction $d_k$, which can be done by comparing the sets of active constraints at $x_k$ and $y_k$ (see~\Cref{lemma:feas_dir}).
It also requires evaluating the objective function and checking the feasibility of the trial step $y_k + \lambda_kd_k$.
The computational effort of this task will depend on the particular structure of $\phi$ and $\Fc$. In the case of box constraints, the largest step-size that makes $y_k + \lambda_kd_k$ feasible can be efficiently computed in Line~8, see \Cref{rem:maxlambda} below. In Section~\ref{sec:application1} we show some computational results for the trust-region subproblems with $\ell_1$ norm constraints (where feasibility needs to be checked) and $\ell_\infty$ norm constraints (where the largest step-size can be readily computed).

\item[(iv)] When $h$ is differentiable, the algorithms introduced by Fukushima and Mine in~\cite{FM81,MF81} can be applied in our setting. On the one hand, the algorithm in~\cite{FM81} performs a line search which is similar to the one in Lines~9-11 of \Cref{alg:BDCA}, but with the significant difference that it is performed at the point $x_k$, instead of doing it at the DCA point $y_k$; that is, it searches for the smallest non-negative integer $l$ such that
$$\phi\left(x_k+\beta^l d_k\right)\leq \phi(x_k)-\alpha\beta ^l\|d_k\|^2,$$
where $0<\beta<1$. Thus, the largest step-size allowed by their algorithm is $1$, which corresponds with the point determined by the DCA, since $x_k+d_k=y_k$. On the other hand, the algorithm defined in~\cite{MF81} performs an exact line search in the direction $d_k$, which may be unaffordable in many practical applications.
\end{itemize}

\begin{remark}\label{rem:maxlambda}
An explicit formula to compute the largest step-size that makes $y_k + \lambda_kd_k$ feasible in Line~8 is provided in \cite{FSS21}. More precisely, feasibility of the trial step size is guaranteed if $\overline\lambda_k$ is chosen so that
\begin{equation}\label{lambda_bound}
\overline\lambda_k\leq\widehat\lambda_k:=
\min\Set{\frac{b_i-\langle a_i,y_k\rangle}{\lvert\langle a_i,d_k\rangle\rvert}: i\not\in I(x_k) \text{ with } \langle a_i,d_k\rangle\neq 0 };
\end{equation}
see \cite[Lemma~5.4(ii)]{FSS21}. In principle, this permits to avoid the extra time consumed for checking feasibility in Line 8. However, the computation of $\widehat\lambda_k$ in \eqref{lambda_bound} may be even more expensive and inefficient  {than checking feasibility of $\lambda_k$} in some applications. This is the case, for instance, of the $\ell_1$-norm trust region subproblem, whose reformulation as a linearly constrained DC program requires an exponential number of constraints (see \Cref{sec:trsp}).
\end{remark}

The next auxiliary lemma shows the equivalence between Line~7 of \Cref{alg:BDCA} and checking the feasibility of the direction generated by DCA.
\begin{lemma}\label{lemma:feas_dir}
If $x_k$ and $y_k$ are generated by~\Cref{alg:BDCA}, then
$$I(y_k)\subseteq I(x_k) \quad\Leftrightarrow\quad d_k:=y_k-x_k\in D(y_k)\quad\Leftrightarrow\quad d_k\,\bot\, a_i,\;\forall i\in I(y_k).$$
\end{lemma}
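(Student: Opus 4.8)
The plan is to establish the three conditions as equivalent by first extracting, for each constraint active at $y_k$, a sign restriction on $\iprods{a_i,d_k}$ that stems purely from feasibility, and then showing that this sign fact collapses the inequality description of $D(y_k)$ into an orthogonality condition. Throughout I would use that both $x_k$ and $y_k$ lie in $\Fc$ (the former by construction of \Cref{alg:BDCA}, the latter as the solution of \eqref{eq:sub_prob}).

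The first step is the single computation that drives everything. For any index $i\in I(y_k)$ one has $\iprods{a_i,y_k}=b_i$, so
\begin{equation*}
\iprods{a_i,d_k}=\iprods{a_i,y_k-x_k}=b_i-\iprods{a_i,x_k}\geq 0,
\end{equation*}
where the inequality is exactly the feasibility of $x_k$. Hence every constraint active at $y_k$ satisfies $\iprods{a_i,d_k}\geq 0$, and equality holds precisely when $\iprods{a_i,x_k}=b_i$, i.e. when $i\in I(x_k)$ as well.

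From here the equivalences follow with no further work. The statement $I(y_k)\subseteq I(x_k)$ means exactly that $\iprods{a_i,d_k}=0$ for all $i\in I(y_k)$, which is the orthogonality condition $d_k\,\bot\,a_i$ for all such $i$; this settles the equivalence of the first and third conditions. To bring in $D(y_k)$, I would invoke the polar description \eqref{eq:feas_dir}, namely $D(y_k)=\set{d\mid\iprods{a_i,d}\leq 0,\ i\in I(y_k)}$. Orthogonality trivially gives $d_k\in D(y_k)$; conversely, if $d_k\in D(y_k)$ then $\iprods{a_i,d_k}\leq 0$ for each $i\in I(y_k)$, and combining this with the nonnegativity derived above forces $\iprods{a_i,d_k}=0$.

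I do not anticipate any real obstacle: the only conceptual ingredient is recognizing that feasibility of $x_k$ supplies a one-sided bound which, together with the polarity characterization of $D(y_k)$, pins $d_k$ to be orthogonal to every normal active at $y_k$. I would note in passing that the third condition as written quantifies over $i\in D(y_k)$, which I read as a typo for $i\in I(y_k)$, since $D(y_k)$ is a cone of directions rather than a set of constraint indices; the argument above treats it accordingly.
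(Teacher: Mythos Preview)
Your proof is correct and follows essentially the same approach as the paper: both hinge on the single computation $\iprods{a_i,d_k}=b_i-\iprods{a_i,x_k}\geq 0$ for $i\in I(y_k)$, after which the equivalences drop out from the polar description~\eqref{eq:feas_dir}. The paper compresses the final deductions into ``the result easily follows,'' whereas you spell them out, and your observation about the typo $i\in D(y_k)$ versus $i\in I(y_k)$ is apt.
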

\begin{proof}
Observe that, for any $i\in I(y_k)$, it holds that
\begin{align*}
\iprods{ a_i, d_k } & = \iprods{a_i, y_k}-\iprods{a_i,x_k} =b_i - \iprods{a_i,x_k}\geq 0.
\end{align*}
Hence, the result easily follows by taking into account~\eqref{eq:feas_dir}.
\end{proof}

In the following proposition, we collect some key inequalities which are useful in the sequel for the convergence analysis of \Cref{alg:BDCA}.

\begin{proposition}\label{prop:main_inequality}
Under Assumptions~\ref{as:A1} and \ref{as:A2}, for all $k\in\N$, the next statements hold:
\begin{enumerate}
\item $\phi(y_{k})\leq\phi(x_{k})-\rho\|d_{k}\|^{2}$;\label{prop:main_inequalityI}
\item $\phi'(y_k;d_k) \leq-\rho\|d_{k}\|^{2}$;\label{prop:main_inequalityII}
\item if the condition at Line~7 of \Cref{alg:BDCA} holds, then there exists some $\delta_k > 0$ such that $y_{k}+\lambda_{k}d_{k}\in \Fc$ and\label{prop:main_inequalityIII}
\begin{equation*}
\phi\left(y_{k}+\lambda d_{k}\right)\leq\phi(y_{k})- \alpha \lambda^2 \|d_{k}\|^{2},\quad\text{for all }\lambda\in[0,\delta_k].\label{eq:main_inequality_bdca}
\end{equation*}
Consequently,  the backtracking step at Lines 9--11 of \Cref{alg:BDCA} terminates after a finite number of iterations.
\end{enumerate}
\end{proposition}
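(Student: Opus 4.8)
The plan is to prove the three statements in order, since each builds on the previous ones and on the defining property of $y_k$ as the minimizer of the strongly convex subproblem~\eqref{eq:sub_prob}. The central tool throughout is that $\phi_k(x) = g(x) - \iprods{u_k,x}$ is strongly convex with parameter $\rho$ (inherited from $g$ via Assumption~\ref{as:A1}), and that $y_k$ minimizes $\phi_k$ over $\Fc$.

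\textbf{Statement (i).} First I would exploit the strong convexity of $\phi_k$ together with the optimality of $y_k$. Since $y_k$ minimizes the strongly convex function $\phi_k$ over the convex feasible set $\Fc$, and $x_k \in \Fc$, the standard strong-convexity inequality gives
\begin{equation*}
\phi_k(x_k) \geq \phi_k(y_k) + \phi_k'(y_k; x_k - y_k) + \frac{\rho}{2}\|x_k - y_k\|^2 \geq \phi_k(y_k) + \frac{\rho}{2}\|d_k\|^2,
\end{equation*}
where the last step uses that $-d_k = x_k - y_k$ is a feasible direction at the minimizer $y_k$, so the directional derivative term is nonnegative. Expanding $\phi_k(x_k) - \phi_k(y_k) = g(x_k) - g(y_k) - \iprods{u_k, x_k - y_k}$ yields $g(x_k) - g(y_k) - \iprods{u_k, -d_k} \geq \frac{\rho}{2}\|d_k\|^2$. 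To convert this into a bound on $\phi = g - h$, I would invoke the strong convexity of $h$ and the fact that $u_k \in \partial h(x_k)$, which gives $h(y_k) \geq h(x_k) + \iprods{u_k, y_k - x_k} + \frac{\rho}{2}\|d_k\|^2$. Adding the two inequalities and rearranging $\phi(y_k) = g(y_k) - h(y_k)$ against $\phi(x_k) = g(x_k) - h(x_k)$ collapses the $\iprods{u_k, d_k}$ terms and produces exactly $\phi(y_k) \leq \phi(x_k) - \rho\|d_k\|^2$.

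\textbf{Statement (ii).} Here I would use the optimality condition at $y_k$ directly. Because $y_k$ minimizes $\phi_k$ over $\Fc$, the first-order condition reads $\phi_k'(y_k; d) \geq 0$ for every feasible direction $d \in D(y_k)$; in differentiable terms, $\iprods{\nabla g(y_k) - u_k, d} \geq 0$ on $D(y_k)$. Since $\phi = (g - \iprods{u_k,\cdot}) - (h - \iprods{u_k,\cdot})$, I compute $\phi'(y_k; d_k) = \iprods{\nabla g(y_k), d_k} - h'(y_k; d_k)$. The gradient term is handled by the optimality of $y_k$ applied to the direction $d_k = y_k - x_k$: evaluating the variational inequality at $x_k$ gives $\iprods{\nabla g(y_k) - u_k, x_k - y_k} \geq 0$, i.e. $\iprods{\nabla g(y_k), d_k} \leq \iprods{u_k, d_k}$. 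For the subdifferential term, strong convexity of $h$ with $u_k \in \partial h(x_k)$ furnishes the lower bound $h'(y_k; d_k) \geq \iprods{u_k, d_k} + \rho\|d_k\|^2$ (the strongly convex refinement of the subgradient inequality along the segment). Substituting both gives $\phi'(y_k; d_k) \leq \iprods{u_k,d_k} - \iprods{u_k,d_k} - \rho\|d_k\|^2 = -\rho\|d_k\|^2$, as claimed.

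\textbf{Statement (iii).} This is the termination of the backtracking line search and is where the real work lies. The idea is that the strict-descent guarantee of part (ii) together with smoothness of $g$ forces the Armijo-type condition $\phi(y_k + \lambda d_k) \leq \phi(y_k) - \alpha\lambda^2\|d_k\|^2$ to hold for all sufficiently small $\lambda > 0$. I would define $\psi(\lambda) := \phi(y_k + \lambda d_k)$ and use the one-sided Taylor expansion $\psi(\lambda) = \phi(y_k) + \lambda\,\phi'(y_k; d_k) + o(\lambda)$, valid because $g$ is differentiable and $h$ is convex with a well-defined directional derivative. By part (ii) the linear coefficient is $\leq -\rho\|d_k\|^2 < 0$ (whenever $d_k \neq 0$), so $\psi(\lambda) - \phi(y_k) \leq -\rho\lambda\|d_k\|^2 + o(\lambda)$, and dividing by $\lambda$ shows the quotient tends to a strictly negative limit; hence for small $\lambda$ the right-hand side beats $-\alpha\lambda^2\|d_k\|^2$, which is only $O(\lambda^2)$. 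The feasibility claim $y_k + \lambda d_k \in \Fc$ for $\lambda \in [0,\delta_k]$ follows because the condition at Line~7 means $I(y_k) \subseteq I(x_k)$, so by Lemma~\ref{lemma:feas_dir} the direction $d_k$ is feasible at $y_k$, i.e. $d_k \in D(y_k)$, and the cone of feasible directions for affine constraints guarantees a whole segment stays in $\Fc$. The main obstacle is the careful treatment of the directional derivative of the convex (possibly nonsmooth) function $h$: one must ensure the expansion $\phi(y_k + \lambda d_k) = \phi(y_k) + \lambda\phi'(y_k;d_k) + o(\lambda)$ is legitimate, using that $-h$ is concave so that $-h(y_k+\lambda d_k) \leq -h(y_k) - \lambda h'(y_k;d_k)$ gives a clean upper bound without error terms, and combining it with the smooth first-order expansion of $g$. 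Once this dominance is established, choosing $\delta_k > 0$ small enough that both feasibility and the Armijo inequality hold completes the proof, and finiteness of the backtracking loop is immediate since $\lambda_k = \beta^j \overline{\lambda}_k \to 0$ eventually enters $[0,\delta_k]$.
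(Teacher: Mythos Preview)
Your proposal is correct and follows essentially the same approach as the paper: part~(i) combines strong convexity of the subproblem with strong convexity of $h$, part~(ii) uses optimality of $y_k$ together with strong monotonicity of $\partial h$, and part~(iii) derives the Armijo condition from the descent direction plus feasibility via Lemma~\ref{lemma:feas_dir}. The only cosmetic difference is in~(ii), where you invoke the variational inequality $\langle \nabla g(y_k)-u_k,\,x_k-y_k\rangle\geq 0$ directly, whereas the paper writes out the KKT system~\eqref{KKTyk} with explicit multipliers $\mu_i$ and then eliminates them using complementary slackness and feasibility of $x_k$; the two formulations are equivalent and yield the same bound.
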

\begin{proof}
The proof of~\Cref{prop:main_inequalityI} is similar to the one of~\cite[Proposition~3]{BDCA2018} and is therefore omitted.
To prove~\Cref{prop:main_inequalityII}, pick any $v\in\partial h(y_k)$. Note that the one-sided directional derivative $\phi'(y_k;d_k)$ is
given by
\begin{align}\label{directionalDerivative}
\phi'(y_k;d_k) \nonumber
&= \lim_{t \downarrow 0} \frac{\phi(y_k+ t d_k)-\phi(y_k)}{t}\\\nonumber
&= \lim_{t \downarrow 0} \frac{g(y_k+ t d_k)-g(y_k)}{t} - \lim_{t \downarrow 0} \frac{h(y_k+ t d_k)-h(y_k)}{t}\\
&\leq \left\langle \nabla g(y_k), d_k \right\rangle  - \left\langle v, d_k \right\rangle,
\end{align}
by convexity of $h$.
Since $y_{k}$ is the unique solution of the strongly convex
problem \eqref{eq:sub_prob}, we can write down the KKT conditions (see, e.g., \cite[Theorem~4.20]{book}) of this problem as
\begin{equation}\label{KKTyk}
\left\{\begin{array}{l}
\nabla{g}(y_{k})+\sum_{i=1}^p\mu_{k,i} a_i=u_k\in\partial h(x_k), \vspace{1ex}\\
\mu_{k,i} (\iprods{a_i,y_k}-b_i)=0, ~~\mu_{k,i}\geq 0,~~ \iprods{a_i,y_k}\leq b_i,~~i=1,\ldots,p.
\end{array}\right.
\end{equation}
The fact that $h$ is strongly convex with a parameter $\rho$ implies, by \cite[Exercise 12.59]{RW}, that $\partial h$ is strongly monotone with constant $\rho$.
Therefore, since $v \in \partial h(y_k)$ and $u_k\in\partial h(x_k)$, we have
\begin{equation*}
\iprods{u_k-v,x_{k}-y_{k}} \geq\rho\|x_{k}-y_{k}\|^{\text{2}}.
\end{equation*}
Hence, combining these expressions, together with the fact that $x_k\in\Fc$, we can derive
\begin{align*}
\iprods{\nabla g(y_{k})-v,d_{k}} &= \Big\langle u_k-\sum_{i=1}^p\mu_{k,i} a_i-v,y_{k}-x_k\Big\rangle\\
&\leq-\rho\|d_{k}\|^2-\sum_{i=1}^p\mu_{k,i}\langle a_i,y_k-x_k\rangle\\
&=-\rho\|d_{k}\|^2+\sum_{i=1}^p\mu_{k,i}\left(\iprods{a_i,x_k}-b_i\right)+\sum_{i=1}^p \mu_{k,i}\left(b_i-\iprods{a_i, y_k}\right)\\
&\leq-\rho\|d_{k}\|^2,
\end{align*}
and the result follows by combining the last inequality with \eqref{directionalDerivative}.

Having in mind the condition at Line~7 of \Cref{alg:BDCA} and \Cref{lemma:feas_dir}, we observe that the proof of~\Cref{prop:main_inequalityIII} is similar to the one of~\cite[Proposition~3.1]{nBDCA}, so we omit it for brevity.
\end{proof}

\begin{remark}[General convex constraints]\label{re:general_convex}
Consider a generalized version of \eqref{eq:P} where the feasible set is formed by arbitrary convex constraints, i.e.,
\begin{equation}\label{eq:PG}\tag{$\mathcal{P}'$}
\left\{\begin{array}{ll}
\displaystyle\min_{x\in\RR^n}& \phi(x) := g(x) - h(x) \vspace{1ex}\\
\text{s.t.} &c_i(x)\leq 0,~i=1,\ldots,p,
\end{array}\right.
\end{equation}
where $g$ and $h$ satisfy \Cref{as:A1,as:A2} and $c_i:\RR^n\to\RR$ are smooth, proper, closed and convex functions, for $i=1,\ldots,p.$ Note that problem \eqref{eq:P} is a particular instance of \eqref{eq:PG} with $c_i(x):=\iprods{a_i,x}-b_i$, for $i=1,\ldots,p.$ The assertion in \Cref{prop:main_inequality}\Cref{prop:main_inequalityII} still holds true for the more general problem~\eqref{eq:PG}; that is, the direction generated by DCA remains a descent direction provided that $x_k$ is feasible for \eqref{eq:PG}. To confirm this, one can easily check that the proof can be rewritten by replacing the linearity of the gradients by the inequality
\begin{equation}\label{convci}
c_i(x_k)\geq c_i(y_k)-\iprods{\nabla{c}_i(y_k),y_k-x_k}.
\end{equation}
However, Line~7 of~\Cref{alg:BDCA} is no longer useful to verify if $d_k$ is a feasible direction, as the equality in~\eqref{eq:feas_dir} only holds for affine constraints. For general convex constraints, we have the inclusion
\begin{equation*}
\Set{d\in\RR^n \mid \iprods{\nabla{c_i}(\overline{x}), d} < 0,~i\in I(\overline{x})} \subset D(\overline{x}).
\end{equation*}
Therefore, one possibility would be to run the boosting step whenever $\langle \nabla c_i(y_k),d_k\rangle <0$ for all $i\in I(y_k)$. Nevertheless, this will never be the case because $x_k$ is feasible for \eqref{eq:PG}. Indeed, from~\eqref{convci}, we obtain that
\begin{equation*}
\iprods{\nabla{c}_i(y_k),d_k}\geq -c_i(x_k)\geq 0,\quad\text{for all } i\in I(y_k).
\end{equation*}
In fact, it can be proved that if $y_k+\lambda d_k\in\Fc$ for some particular $\lambda>0$, then the points in the segment $[x_k, y_k+\lambda d_k]$ must be active for all $i\in I(y_k)$.
\end{remark}

We are now in the position to establish the main convergence result of \Cref{alg:BDCA}.

\begin{theorem}\label{prop:innerloop}
Under Assumptions~\ref{as:A1}, \ref{as:A2} and \ref{as:A3}, for any $x_{0}\in \Fc$, either BDCA returns a KKT point of~\eqref{eq:P} or it generates an infinite sequence such that the following statements hold.
\begin{enumerate}
\item $\phi(x_{k})$ is monotonically decreasing and hence convergent to some $\bar{\phi}$.\label{prop:innerloopI}

\item Suppose that $\{x_k\}$ and $\{u_k\}$ are bounded. Then any limit point of $\Set{x_{k}}$ is a KKT point of \eqref{eq:P} and any limit point of $\{u_{k}\}$ is a critical point of \eqref{eq:D}.
\label{prop:innerloopII}

\item We have $\sum_{k=0}^{+\infty}\norms{d_{k}}^{2}<+\infty$.
Moreover, if there is some $\overline{\lambda}$ such that $\lambda_k \leq \overline{\lambda}$ for all $k$, then $\sum_{k=0}^{+\infty}\norms{x_{k+1}-x_{k}}^{2}<+\infty$.\label{prop:innerloopIII}
\end{enumerate}
\end{theorem}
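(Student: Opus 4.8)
The plan is to establish the three assertions in order, the engine throughout being the per-iteration descent furnished by \Cref{prop:main_inequality}. For (i) I would first prove the uniform descent estimate
\begin{equation*}
\phi(x_{k+1}) \leq \phi(x_k) - \rho\norms{d_k}^2 \qquad\text{for every }k.
\end{equation*}
There are two cases, according to Line~7. If $I(y_k)\subseteq I(x_k)$, then by \Cref{prop:main_inequality}\Cref{prop:main_inequalityIII} the backtracking of Lines~9--11 terminates at some $\lambda_k\geq 0$ satisfying the Armijo condition $\phi(x_{k+1})=\phi(y_k+\lambda_k d_k)\leq\phi(y_k)-\alpha\lambda_k^2\norms{d_k}^2\leq\phi(y_k)$; otherwise $\lambda_k=0$, $x_{k+1}=y_k$, and trivially $\phi(x_{k+1})=\phi(y_k)$. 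In either case $\phi(x_{k+1})\leq\phi(y_k)$, and combining this with \Cref{prop:main_inequality}\Cref{prop:main_inequalityI} yields the displayed inequality. Thus $\set{\phi(x_k)}$ is nonincreasing; being bounded below by $\phi^{\star}>-\infty$ (\Cref{as:A2}), it converges to some $\phi^*$. For the termination alternative, if $d_k=0$ then $y_k=x_k$ and the subproblem conditions \eqref{KKTyk} read $0=\nabla g(x_k)-u_k+\sum_{i=1}^p\mu_i a_i$ with $u_k\in\partial h(x_k)$ and the sign/complementarity relations, which is precisely \eqref{KKT}; hence the returned point is a KKT point.

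Assertion (iii) follows by telescoping. Summing the descent estimate from $0$ to $N$ gives $\rho\sum_{k=0}^N\norms{d_k}^2\leq\phi(x_0)-\phi(x_{N+1})\leq\phi(x_0)-\phi^*$, so letting $N\to\infty$ produces $\sum_{k}\norms{d_k}^2<+\infty$. Since $x_{k+1}-x_k=(1+\lambda_k)d_k$, if $\lambda_k\leq\overline\lambda$ for all $k$ then $\norms{x_{k+1}-x_k}^2\leq(1+\overline\lambda)^2\norms{d_k}^2$, and the summability of $\set{x_{k+1}-x_k}$ is immediate.

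The heart of the matter is (ii). Let $x^*$ be a limit point, say $x_{k_j}\to x^*$. Summability forces $\norms{d_k}\to 0$, hence $y_{k_j}=x_{k_j}+d_{k_j}\to x^*$, and I would pass to the limit in \eqref{KKTyk}. First, since $\partial h$ is locally bounded on the interior of $\dom h$ and $x_{k_j}\to x^*$, the subgradients $u_{k_j}\in\partial h(x_{k_j})$ are bounded, so along a further subsequence $u_{k_j}\to u^*$; closedness of the graph of $\partial h$ then gives $u^*\in\partial h(x^*)$. Using continuity of $\nabla g$ (\Cref{as:A2}) and \eqref{KKTyk}, $\sum_{i=1}^p\mu_i^{(k_j)}a_i=u_{k_j}-\nabla g(y_{k_j})\to u^*-\nabla g(x^*)$. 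To control the multipliers, note that for each inactive $i\notin I(x^*)$ we have $\iprods{a_i,y_{k_j}}<b_i$ for large $j$, whence $\mu_i^{(k_j)}=0$ by complementarity; as there are finitely many constraints, $I(y_{k_j})\subseteq I(x^*)$ for all large $j$, so $\sum_{i=1}^p\mu_i^{(k_j)}a_i\in A(x^*)$. Because $A(x^*)$ is a finitely generated, hence closed, convex cone, the limit satisfies $u^*-\nabla g(x^*)\in A(x^*)$; writing $u^*-\nabla g(x^*)=\sum_{i\in I(x^*)}\mu_i^* a_i$ with $\mu_i^*\geq 0$ and setting $\mu_i^*=0$ otherwise yields multipliers verifying \eqref{KKT}, so $x^*$ is a KKT point. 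If in addition $\phi$ is coercive, then $\set{x_k}$ lies in the sublevel set $\set{x\in\R^n\mid\phi(x)\leq\phi(x_0)}$, which is bounded; hence $\set{x_k}$ admits a convergent subsequence, whose limit is a KKT point by the above.

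I expect the principal obstacle to be the treatment of the dual variables in the limit: the individual multipliers $\mu_i^{(k_j)}$ need neither converge nor stay bounded, so the limiting argument must avoid them entirely and instead lean on the closedness of the finitely generated cone $A(x^*)$ together with the eventual inclusion $I(y_{k_j})\subseteq I(x^*)$. A secondary technical point is the boundedness of $\set{u_{k_j}}$, which rests on $x^*$ lying in the interior of $\dom h$ (automatic when $\dom h=\R^n$, and otherwise guaranteed as long as the iterates remain in a region where $\partial h$ is locally bounded).
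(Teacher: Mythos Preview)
Your argument follows the same skeleton as the paper's: establish the descent inequality from \Cref{prop:main_inequality}, deduce convergence of $\{\phi(x_k)\}$ and summability of $\{\|d_k\|^2\}$, and then pass to the limit in the KKT system~\eqref{KKTyk} of the subproblem. Part~(i) and part~(iii) match the paper essentially verbatim (the paper uses the slightly sharper decrease $(\alpha\lambda_k^2+\rho)\|d_k\|^2$, but this plays no role).

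Where you genuinely diverge is in the limiting argument for~(ii). The paper simply writes ``taking the limit as $j\to\infty$ in~\eqref{KKTbis}'' and invokes continuity of $\nabla g$ and closedness of the graph of $\partial h$, keeping the same symbols $\mu_i$ in the limiting system without explaining why the multipliers, which depend on $k_j$, may be passed to the limit. You instead extract a convergent subsequence of $\{u_{k_j}\}$ (using local boundedness of $\partial h$), observe that eventually $I(y_{k_j})\subseteq I(x^*)$ so that $\sum_i\mu_i^{(k_j)}a_i\in A(x^*)$, and then use closedness of the finitely generated cone $A(x^*)$ to conclude that the limit $u^*-\nabla g(x^*)$ lies in $A(x^*)$, from which KKT multipliers at $x^*$ can be read off. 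This is a genuine improvement in rigor: the paper's quick limit hides exactly the obstacle you identified, namely that individual multipliers need not converge, and your cone argument circumvents it cleanly. Your caveat about local boundedness of $\partial h$ near $x^*$ is also well placed; it is automatic when $\dom h=\R^n$, and the paper tacitly relies on the same hypothesis.
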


\begin{proof}
If \Cref{alg:BDCA} is terminated at Line 5 and returns $x_k$, then $x_k = y_k$.
From \eqref{KKT} and~\eqref{KKTyk}, it is clear that $x_k$ is a KKT point of \eqref{eq:P}.
Otherwise, by Proposition \ref{prop:main_inequality} and Line 15 of \Cref{alg:BDCA}, we have
\begin{equation}\label{eq:phi_decreasing}
\phi(x_{k+1})\leq\phi(y_{k})-\alpha\lambda^2_{k}\|d_{k}\|^{2}\leq\phi(x_{k})-\left(\alpha\lambda^2_{k}+\rho\right)\|d_{k}\|^{2},
\end{equation}
where $\lambda_k \geq 0$.
Therefore, the sequence $\Set{ \phi(x_{k})}$ converges to some $\bar{\phi}$, since it is monotonically decreasing
and bounded from below, by \eqref{eq:phi_bounded_below}.
As a consequence, we obtain
\begin{equation*}
\phi(x_{k+1})-\phi(x_{k})\to 0,~\text{as}~k\to\infty,
\end{equation*}
which implies $\norms{d_{k}}^{2}=\norms{y_{k}-x_{k}}^{2}\to 0$, by~\eqref{eq:phi_decreasing}.

Now, if $\overline{x}$ is a limit point of $\Set{x_{k}}$, then there exists a subsequence $\Set{x_{k_{j}}}$ converging to~$\overline{x}$.
Then, as $\norms{y_{k_{j}}-x_{k_{j}}}\to 0$, we have $y_{k_{j}}\to\overline{x}$.
From \eqref{KKTyk}, we obtain
\begin{equation} \label{KKTbis}
\left\{\begin{array}{l}
\nabla{g}(y_{k_j}) + \sum_{i=1}^p\mu_{k_j,i} a_i =u_{k_j}\in \partial h(x_{k_j}),\vspace{1ex}\\
\mu_{k_j,i} (\iprods{a_i,y_{k_j}}-b_i)=0, ~~\mu_{k_j,i}\geq 0,~~ \iprods{a_i,y_{k_j}}\leq b_i,~~i=1,\ldots,p.
\end{array}\right.
\end{equation}
Since the sequence $\{u_k\}$ is bounded by assumption, without lost of generality we may assume that $u_{k_j}\to \overline{u}$. It also holds that $\nabla{g}(y_{k_j})\to \nabla{g}(\overline{x})$ by continuity of $\nabla g$. The sequence of Lagrange multipliers $\{\mu_{k_j}\}\in\mathbb{R}^p$ must also be bounded. Indeed, suppose to the contrary that $\|\mu_{k_j}\|\to\infty$ and assume without loss of generality that $\lim_{j\to\infty}\frac{\mu_{k_j}}{\|\mu_{k_j}\|}=\mu^*$, with $\mu^*\in\RR^p_+$ (the non-negative orthant) satisfying $\|\mu^*\|=1$. Then, dividing the first equality in~\eqref{KKTbis} by $\|\mu_{k_j}\|$ and letting $j\to\infty$, we obtain
$$\sum_{i=1}^p \mu_i^*a_i=0_n.$$
Performing the same procedure in the second equality in~\eqref{KKTbis} gives
$$\mu_i^*(\langle a_i,\overline{x}\rangle-b_i)=0,$$
so we deduce that $\mu_i^*=0$ for all $i\not\in I(\overline{x})$. Thus,
$$\sum_{i\in I(\overline{x})} \mu_i^*a_i=0_n.$$
Thanks to the Slater Assumption~\ref{as:A3}, we know that there exists $\hat{x}\in\RR^n$ such that $\langle a_i,\hat{x}\rangle<b_i$ for all $i\in I(\overline{x})$. Hence,
$$0=\sum_{i\in I(\overline{x})} \mu_i^*\langle a_i,\hat{x}-\overline{x}\rangle=\sum_{i\in I(\overline{x})} \mu_i^*\left(\langle a_i,\hat{x}\rangle-\langle a_i,\overline{x}\rangle\right)=\sum_{i\in I(\overline{x})} \mu_i^*\left(\langle a_i,\hat{x}\rangle-b_i\right),$$
which implies $\mu_i^*=0$ for all $i\in I(\overline{x})$, since $\mu^*\in\RR^p_+$. This implies that $\mu^*=0_p$, a contradiction with the fact that $\|\mu^*\|=1$.

Therefore, by extracting subsequences if necessary, we can assume that
\begin{equation}\label{lim_mus}
\lim_{j\to\infty}\mu_{k_j,i} =\mu_i\geq 0,\quad \text{for all } i=1,2,\ldots,p.
\end{equation}%
Taking the limit as $j \to \infty$ in \eqref{KKTbis}, thanks to the closedness of the graph of $\partial{h}$ (see~\cite[Theorem 24.4]{RT}), we obtain
\begin{equation}\label{KKTlimit}
\left\{\begin{array}{l}
\nabla{g}(\overline{x}) +\sum_{i=1}^p\mu_i a_i = \overline{u}\in \partial h(\overline{x}),\vspace{1ex}\\
\mu_i (\iprods{a_i,\overline{x}}-b_i)=0, ~~\mu_i\geq 0,~~ \iprods{a_i,\overline{x}}\leq b_i,~~i=1,\ldots,p,
\end{array}\right.
\end{equation}
which means that $\overline{x}$ is a KKT point of \eqref{eq:P}. From \eqref{KKTlimit} we derive that $\overline{x}\in\partial h^\ast(\overline{u})$ and also that
\begin{equation*}
\overline{u}=\nabla g(\overline{x})+ \sum_{i\in I(\overline{x})}\mu_ia_i \in \nabla g(\overline{x})+A(\overline{x})=\partial (g+\iF)(\overline{x}),
\end{equation*}
which is equivalent to $\overline{x}\in\partial (g+\iF)^\ast(\overline{u})$ and, hence, $\overline{u}$ is a critical point of \eqref{eq:D}.
The proof of \Cref{prop:innerloopIII} is similar to that of \cite[Proposition 5(iii)]{BDCA2018} and is thus omitted.
\end{proof}

\begin{remark}\label{rem:int}
The assertion in \Cref{prop:innerloop}\Cref{prop:innerloopII} remains valid for any limit point of $\{x_k\}$ in the interior of $\dom h$ without requiring the dual sequence $\{u_k\}$ to be bounded.
Indeed, let $\overline{x}\in\inte\dom(h)$ be a cluster point of $\{x_k\}$ and let $\{x_{k_j}\}$ be a subsequence converging to $\overline{x}$. We can apply~\cite[Corollary~24.5.1]{RT} to obtain that for any $\varepsilon>0$, there exists a positive integer $j_0\geq 1$ such that
\begin{equation*}
u_{k_j}\in\partial h(x_{k_j})\subset \partial h(\overline{x})+\varepsilon\mathbb{B}, \quad \forall j\geq j_0;
\end{equation*}
where $\mathbb{B}$ denotes the Euclidean unit ball of $\RR^n$. Hence, since $\partial h(\overline{x})$ is a bounded set, the dual subsequence $\{u_{k_j}\}$ is also bounded and the proof of \Cref{prop:innerloop}\Cref{prop:innerloopII} stands.
\end{remark}

\begin{remark}
Similar to \cite[Theorem 1]{BDCA2018} and \cite[Theorem 4.3 and Theorem 4.9]{nBDCA}, if we further assume that the function $\phi$ satisfies the Kurdyka--\L{}ojasiewicz property, then it can be proved that the sequence $\Set{x_k}$ converges to a KKT point of \eqref{eq:P}.
Moreover, convergence rates can also be deduced depending on the \L{}ojasiewicz exponent.
Especially, when the objective function $\phi$ is quadratic (e.g., in some of our numerical experiments), it was proved \cite[Theorem 4.2]{JOTA2018} that the function $\phi+\iota_{\Fc}$ satisfies the  Kurdyka--\L{}ojasiewicz property with exponent $\frac{1}{2}$. Combining this with the technique in \cite[Theorem 1]{BDCA2018}, it is a routine task to derive the linear convergence of  the sequence $\Set{x_k}$ when the sequence has a cluster point. The purpose of the next section is to prove, using a similar technique to \cite[Theorem~2.1]{TuanJMAA}, that the latter condition is not needed: for quadratic functions, BDCA always generates a sequence which is linearly convergent to a KKT point of the problem without requiring the existence of a cluster point.
\end{remark}

\section{Linear convergence for quadratic objective functions}\label{sec:linearRate}
In this section we prove the R-linear convergence of BDCA when the objective function $\phi$ of \eqref{eq:P}  is quadratic, that is, for problems of the form
\begin{equation}\label{eq:Pq}\tag{$\mathcal{P}_Q$}
\left\{\begin{array}{ll}
\displaystyle\min_{x\in\RR^n}& \phi(x) := \frac{1}{2}\langle Qx,x\rangle+ \langle q,x\rangle \vspace{1ex}\\
\text{s.t.} &\iprods{a_i, x}\leq b_i,~i=1,\ldots,p,
\end{array}\right.
\end{equation}
where $Q\in\RR^{n\times n}$ { is a symmetric matrix,} $q\in\RR^{n}$, and $a_i\in\RR^n$ and $b_i\in\RR$, for $i=1,\ldots,p$.
In this setting, $\overline{x}$ is a KKT point of \eqref{eq:Pq} if there exist multipliers $\mu_1,\mu_2,\ldots,\mu_p \in \mathbb{R}$ such that
\begin{equation*}\label{KKT_Pq}
\left\{\begin{array}{l}
0=Q\overline{x}+q +\sum_{i=1}^p\mu_i a_i, \vspace{1ex}\\
0 = \mu_i (\iprods{a_i,\overline{x}}-b_i),~~i=1,\ldots,p,\vspace{1ex}\\
\mu_i\geq 0, ~~\iprods{a_i,\overline{x}}\leq b_i,~~i=1,\ldots,p.
\end{array}\right.
\end{equation*}
We denote by $\overline{\Fc}$ the set of all KKT points of \eqref{eq:P}.

As $Q$ is not required to be positive semidefinite, \eqref{eq:Pq} is a nonconvex problem. However, the matrix $Q$ can be easily decomposed as $Q=Q_1-Q_2$, with $Q_1$ and $Q_2$ positive definite. Indeed, one can take $$Q_1:=\sigma I \quad\text{and}\quad Q_2:=\sigma I-Q,$$ for $\sigma > \max\{0,\lambda_{\max}(Q)\}$, where $\lambda_{\max}(Q)$ is the largest eigenvalue of $Q$ and $I$ denotes the identity matrix. Thus, \eqref{eq:Pq} can be equivalently written in the form of \eqref{eq:P} as
\begin{equation}\label{eq:Pq_decomposition}
\left\{\begin{array}{ll}
\displaystyle\min_{x\in\RR^n}& g(x) - h(x)=\phi(x) \vspace{1ex}\\
\text{s.t.} &\iprods{a_i, x}\leq b_i,~i=1,\ldots,p,
\end{array}\right.
\end{equation}
with
\begin{equation}\label{eq:Pq_decomposition_gh}
g(x):= \frac{\sigma}{2}\|x\|^2+\langle q, x\rangle \quad \text{and}\quad h(x):=\frac{1}{2}\langle \left(\sigma I-Q\right)x, x \rangle.
\end{equation}
Observe that both functions $g$ and $h$ are strongly convex with parameters $\sigma$ and $\sigma-\lambda_{\max}(Q)$, respectively. Thus, \Cref{as:A1} holds for
\begin{equation}\label{eq:rho}
\rho:=\min\{\sigma, \sigma-\lambda_{\max}(Q)\},
\end{equation}
while \Cref{as:A2} trivially holds.

By using the indicator function $\iota_{\Fc}$ of the feasible set $\Fc$, we can rewrite problem~\eqref{eq:Pq} as an unconstrained nonsmooth DC optimization problem, which can be tackled by the DCA. In this case, the DCA becomes the projected gradient method from convex programming~\cite{TaoOMS}
\begin{equation}\label{DCA}
x_{k+1} = P_{\Fc} \left( x_k - \frac{1}{\sigma} (Qx_k+q)\right),
\end{equation}
where $P_{\Fc}$ denotes the projection mapping onto the feasible set $\Fc$.

To derive the R-linear convergence of the iterative sequence generated by the BDCA, we will use the following lemmas. The first one is a classical result regarding the connected components of the KKT set $\overline{\Fc}$ and can be found in \cite[Lemma 3.1]{LuoTseng1992}.

\begin{lemma}\label{TsengLemma}
	Let $\Fc_1, \Fc_2,\ldots,\Fc_r$ be the connected components of the KKT set $\overline{\Fc}$.  Then we have
	$$
	\overline{\Fc} = \bigcup_{i=1}^r \Fc_i,
	$$
	and the following properties hold:
	\begin{enumerate}
		\item each $\Fc_i$ is the union of finitely many polyhedral convex sets;\label{TsengLemmaI}
		\item the sets $\Fc_i$, $i=1,2,\ldots,r$, are properly separated from each others, that is \label{TsengLemmaII}
		$$
		\inf\left\{ d(x, \Fc_i), x \in \Fc_j \right\} >0,  \quad\text{for all } i \neq j;
		$$
		\item $\phi$ is constant on each $\Fc_i$.\label{TsengLemmaIII}
	\end{enumerate}
\end{lemma}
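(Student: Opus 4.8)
The plan is to reduce everything to the polyhedral structure of the KKT set. First I would decompose $\Fc^*$ according to active sets: for each subset $J\subseteq\{1,\ldots,p\}$, consider the set $P_J$ of points $x$ for which there exist multipliers $\mu_i\geq 0$, $i\in J$, with $Qx+q+\sum_{i\in J}\mu_i a_i=0$, $\iprods{a_i,x}=b_i$ for $i\in J$, and $\iprods{a_i,x}\leq b_i$ for $i\notin J$. Writing down the KKT system and splitting according to the support $J:=\set{i : \mu_i>0}$ of the multipliers shows that $\Fc^*=\bigcup_{J} P_J$. Each $P_J$ is the image under the linear coordinate projection $(x,\mu)\mapsto x$ of a polyhedron in $(x,\mu)$-space defined by finitely many affine equalities and inequalities; since the projection of a polyhedron is again a convex polyhedron (Fourier--Motzkin elimination), each $P_J$ is polyhedral and convex. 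As $J$ ranges over finitely many subsets, this already establishes \Cref{TsengLemmaI}: each connected component, being the union of those $P_J$ contained in it, is a union of finitely many polyhedral convex sets, and there are finitely many components.

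For \Cref{TsengLemmaIII} I would first show that $\phi$ is constant on each individual $P_J$ and then propagate this along a component. Fix $x,x'\in P_J$ and set $d:=x'-x$. Since $\iprods{a_i,x}=\iprods{a_i,x'}=b_i$ for $i\in J$, we have $\iprods{a_i,d}=0$ for all $i\in J$. Using the gradient equation $\nabla\phi(x)=Qx+q=-\sum_{i\in J}\mu_i a_i$ (and likewise at $x'$) gives $\iprods{\nabla\phi(x),d}=\iprods{\nabla\phi(x'),d}=0$, whence $\iprods{Qd,d}=\iprods{\nabla\phi(x')-\nabla\phi(x),d}=0$. As $\phi$ is quadratic, $\phi(x')-\phi(x)=\iprods{\nabla\phi(x),d}+\tfrac{1}{2}\iprods{Qd,d}=0$, so $\phi$ is constant on $P_J$. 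To pass to a whole component $\Fc_i$, note each $P_J$ is convex, hence connected, hence contained in a single component; therefore $\Fc_i$ is the union of those $P_J$ contained in it. If the intersection graph of these polyhedra (nodes $=$ the $P_J$, edges $=$ nonempty pairwise intersections) were disconnected, $\Fc_i$ would split into two disjoint nonempty closed sets, contradicting connectedness. Hence this graph is connected, and since $\phi$ agrees on overlapping polyhedra it takes a single value throughout $\Fc_i$.

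Finally, \Cref{TsengLemmaII} follows from the elementary fact that two disjoint polyhedra are at positive distance. For polyhedra $P,P'$, the Minkowski difference $P-P'$ is again a polyhedron, and $d(P,P')=d(0,P-P')$ is attained (projection onto a nonempty closed convex set) and is positive precisely when $0\notin P-P'$, i.e. when $P\cap P'=\emptyset$. Since distinct components are disjoint unions of finitely many of the $P_J$, and each cross-pair of these polyhedra is disjoint, we get $\inf\set{d(x,\Fc_i): x\in\Fc_j}=d(\Fc_i,\Fc_j)=\min_{a,b} d(P_a,P'_b)>0$ for $i\neq j$, the minimum being over the finitely many pairs.

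The main obstacle I anticipate is \Cref{TsengLemmaIII}: the identity $\iprods{Qd,d}=0$ is what forces $\phi$ to be genuinely constant (and not merely affine) along the KKT set, and it is special to the quadratic case together with the active-set/complementarity structure — this is exactly the place where positive definiteness of $Q$ is \emph{not} needed but the KKT gradient condition is used at both endpoints. The accompanying combinatorial step, lifting constancy from each $P_J$ to the connected component via the intersection graph, is the other delicate point: one must argue carefully that connectedness of a finite union of closed convex sets indeed forces the nerve of that cover to be connected.
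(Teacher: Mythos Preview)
The paper does not prove this lemma at all: it states the result and refers the reader to \cite[Lemma~3.1]{LuoTseng1992}. So there is no in-paper argument to compare against; your proposal is a self-contained proof where the paper offers only a citation.

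Your argument is correct. The active-set decomposition $\Fc^*=\bigcup_J P_J$ with each $P_J$ a linear projection of a polyhedron in $(x,\mu)$-space is the standard way to exhibit the polyhedral structure of the KKT set, and your count of components (finitely many, since each nonempty $P_J$ is convex and hence lies in a single component) is sound. For \Cref{TsengLemmaIII}, the computation $\iprods{\nabla\phi(x),d}=\iprods{\nabla\phi(x'),d}=0$ and hence $\iprods{Qd,d}=0$ is exactly right and is indeed the key quadratic identity; the ``nerve'' argument lifting constancy from each $P_J$ to the whole component is also valid, since a finite union of closed sets with disconnected intersection graph would split $\Fc_i$ into two disjoint nonempty closed pieces. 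For \Cref{TsengLemmaII}, your use of the fact that the Minkowski difference of two polyhedra is again a polyhedron (hence closed, so the infimum distance is attained) is the cleanest way to get strict positivity; the reduction to a finite minimum over cross-pairs then finishes it.

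One small remark: in your definition of $P_J$ you allow $\mu_i\geq 0$ for $i\in J$ while later invoking ``support $J=\{i:\mu_i>0\}$'' to justify $\Fc^*\subseteq\bigcup_J P_J$. This is harmless---the relaxed definition only enlarges $P_J$, and the reverse inclusion $\bigcup_J P_J\subseteq\Fc^*$ still holds because you impose $\iprods{a_i,x}=b_i$ for $i\in J$---but it is worth stating both inclusions explicitly to avoid the appearance of a mismatch.
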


The second one is a local error bound result originally stated in \cite[Theorem 2.3]{LuoTseng1992} and  later extended in \cite[Lemma 2.1]{TuanJMAA}.
\begin{lemma}
	There exist scalars $\epsilon > 0$ and $\tau>0$ such that
	\begin{equation}\label{errorbound}
	d(x, \overline{\Fc}) \leq \tau \left\| x- P_{\Fc} \left( x - \frac{1}{\sigma} (Qx+q)\right) \right\|,
	\end{equation}
	for all $x \in \Fc$ with
	\begin{equation}\label{localCondition}
	\left \| x- P_{\Fc} \left( x - \frac{1}{\sigma} (Qx+q)\right) \right \| \leq \epsilon.
	\end{equation}
\end{lemma}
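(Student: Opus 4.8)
The plan is to identify the quantity
\[
r(x):= x - P_{\Fc}\!\left(x-\tfrac1\sigma(Qx+q)\right)
\]
controlling \eqref{errorbound} as the \emph{natural residual} of the affine variational inequality attached to \eqref{eq:Pq}, and to read the estimate off the piecewise-affine structure of this map. The first step is the pointwise equivalence $r(x)=0 \Leftrightarrow x\in\Fc^*$. By the variational characterization of the projection, $x=P_{\Fc}\!\left(x-\tfrac1\sigma(Qx+q)\right)$ holds precisely when $x\in\Fc$ and $\iprods{Qx+q,\,y-x}\ge 0$ for every $y\in\Fc$; since all constraints are affine, this stationarity condition is equivalent to the KKT system that defines $\Fc^*$. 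Moreover $r(x)=0$ already forces $x\in\Fc$, because $x$ then coincides with a point in the range of $P_{\Fc}$. Hence $r^{-1}(0)=\Fc^*$, and the claimed bound $d(x,\Fc^*)\le\tau\norm{r(x)}$ is exactly a local upper-Lipschitz (calmness) estimate for the inverse of $r$ at the value $0$.

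The second step is to exploit that $r$ is a piecewise affine map. The projection $P_{\Fc}$ onto the polyhedron $\Fc$ is piecewise affine: the normal-cone regions of $\Fc$ partition $\R^n$ into finitely many polyhedra, on each of which $P_{\Fc}$ is affine. Composing with the affine map $x\mapsto x-\tfrac1\sigma(Qx+q)$ and subtracting from the identity, $r$ is again piecewise affine; in particular its graph is a finite union of polyhedra, so its set-valued inverse $r^{-1}$ is a \emph{polyhedral multifunction}. By Robinson's theorem, every polyhedral multifunction is upper Lipschitz at each point of its domain with a uniform modulus: there exist $\tau>0$ and a neighbourhood $V$ of $0$ such that $r^{-1}(w)\subseteq r^{-1}(0)+\tau\norm{w}\,\mathbb{B}$ for all $w\in V$, where $\mathbb{B}$ is the closed unit ball and $r^{-1}(0)=\Fc^*$ is nonempty whenever \eqref{eq:Pq} admits a KKT point.

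The conclusion is then immediate. Choosing $\epsilon>0$ so that $\epsilon\,\mathbb{B}\subseteq V$, any $x\in\Fc$ with $\norm{r(x)}\le\epsilon$ satisfies $x\in r^{-1}(r(x))\subseteq \Fc^*+\tau\norm{r(x)}\,\mathbb{B}$, which is precisely \eqref{errorbound} under the localisation \eqref{localCondition}. I expect the main obstacle to be the absence of compactness: because $\Fc$ is unbounded one cannot argue by a naive normalisation/subsequence contradiction, and this is exactly what the combinatorial, piecewise-polyhedral machinery replaces. Concretely, proving Robinson's upper-Lipschitz property from scratch amounts to decomposing the graph of $r^{-1}$ into its finitely many polyhedral pieces and applying Hoffman's error bound to each, then taking the worst constant; this is the route of \cite{LuoTseng1992}, whose Theorem~2.3 (as extended in \cite[Lemma~2.1]{TuanJMAA}) yields the statement directly, so I would cite those results rather than reprove the polyhedral error bound. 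A secondary point to verify carefully is that the active-set structure underlying \Cref{TsengLemma} is consistent with the residual used here, so that $r^{-1}(0)$ is genuinely the full KKT set $\Fc^*$ and not merely one of its connected components.
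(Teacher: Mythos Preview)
The paper does not prove this lemma: it is stated as a known local error bound result and attributed directly to \cite[Theorem~2.3]{LuoTseng1992} and \cite[Lemma~2.1]{TuanJMAA}, with no argument given. Your proposal therefore goes further than the paper by actually sketching a proof, and the route you outline---recognising $r(x)=x-P_{\Fc}(x-\tfrac1\sigma(Qx+q))$ as the natural residual of the affine VI, observing that $r$ is piecewise affine because $P_{\Fc}$ is, and then invoking Robinson's upper-Lipschitz theorem for polyhedral multifunctions (equivalently, Hoffman's bound on each affine piece)---is correct and is precisely the machinery behind the Luo--Tseng result you end up citing anyway. So your treatment is consistent with, and in fact subsumes, the paper's: you supply the structural explanation that the paper leaves implicit in its citation, and you correctly identify that the non-compactness of $\Fc$ is what forces the polyhedral/Hoffman argument rather than a soft compactness one. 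The only minor caveat is that Robinson's inclusion $r^{-1}(w)\subseteq r^{-1}(0)+\tau\norm{w}\mathbb{B}$ is vacuous when $\Fc^*=\emptyset$, so strictly speaking the lemma presupposes $\Fc^*\neq\emptyset$; this is harmless here since the only place the lemma is used (Theorem~\ref{linearRate}) assumes \eqref{eq:Pq} has a solution.
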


We are now in a position to establish the R-linear (aka geometric) convergence of the iterative sequence~$\{x_{k}\}$ generated by BDCA. Geometric convergence is a special type of R-linear convergence, which is implied by Q-linear convergence (see, e.g.~\cite[Section~5.2.1]{book}). The next result extends~\cite[Theorem~2.1]{TuanJMAA} to the BDCA. Its proof employs similar techniques, which are originally based on~\cite{LuoTseng1992}.

\begin{theorem}\label{linearRate}
	If \eqref{eq:Pq} has a solution and \Cref{as:A3} holds, then the sequence $\{x_{k}\}$ generated by BDCA converges geometrically to  a KKT point $\overline{x}$ of~\eqref{eq:Pq}, that is, there exist some constants $C>0$ and $\eta\in{]0,1[}$ such that
\begin{equation}\label{eq:Rlinear}
\|x_k-\overline{x}\|\leq C\eta^k,\quad\text{for all large }k.
\end{equation}%
\end{theorem}
\begin{proof}
First, observe that by~\eqref{eq:phi_decreasing} we have
$$(\rho+\alpha\lambda_k^2)\|d_k\|^2\leq\phi(x_k)-\phi(x_{k+1}).$$
By \Cref{prop:innerloop}\Cref{prop:innerloopI}, we know that the right-hand side of this inequality converges to zero as $k\to\infty$. Thus,
$$
\lim_{k \to \infty} \|y_{k}-x_k\| = \lim_{k \to \infty} \|d_k\| =   0 = \lim_{k \to \infty} \lambda_k \|d_{k}\|,
$$
which implies
\begin{equation}\label{eq:0}
\lim_{k \to \infty} \|x_{k+1}-x_k\| = \lim_{k \to \infty} (1+\lambda_k)\|d_k\| =   0.
\end{equation}
Let $\epsilon > 0$ and $\tau>0$ be such that \eqref{errorbound} and \eqref{localCondition} hold.
Since $y_k = P_{\Fc} \left( x_k - \frac{1}{\sigma} (Qx_k+q) \right)$, there exists some $k_0\in\N$ such that
$$
\left \Vert x_k-P_{\Fc} \left( x_k - \frac{1}{\sigma} (Qx_k+q) \right) \right \Vert= \| y_k -x_k\| \leq \epsilon,\quad \forall k \geq k_0.
$$
Hence, we obtain
\begin{equation*}
d(x_k, \overline{\Fc}) \leq \tau \left \Vert x_k- P_{\Fc} \left( x_k - \frac{1}{\sigma} (Qx_k+q)\right) \right \Vert =
\tau \|x_k -y_k \|  = \tau \|d_k\|, \quad \forall k \geq k_0.
\end{equation*}
Due to the fact that  $\overline{\Fc}$ is nonempty and closed, there exists $z_k \in \overline{\Fc}$, for each $k\in\N$, such that  ${d}(x_k, \overline{\Fc})  =  \|x_k - z_k\|$. Thus
\begin{equation}\label{V-2}
\|x_k - z_k\| \leq \tau \|d_k\|, \quad \forall k \geq k_0,
\end{equation}
which implies
\begin{equation}\label{eq:1}
\lim_{k \to \infty} \|x_k-z_k\|  = 0.
\end{equation}
Since
$$
\|z_{k+1}-z_k\| \leq \|z_{k+1}-x_{k+1}\| + \|x_{k+1}-x_{k}\|+\|x_{k}-z_{k}\|,
$$
it follows from~\eqref{eq:0} and~\eqref{eq:1} that
\begin{equation}\label{zkAssymtotic}
\lim_{k \to \infty} \|z_{k+1}-z_k\|  = 0.
\end{equation}
Now, let $\Fc_1, \Fc_2,\ldots,\Fc_r$ be the connected components of $\overline{\Fc}$. By \Cref{TsengLemma}\Cref{TsengLemmaII} and \eqref{zkAssymtotic} there exists  $\Fc_0 \in \{\Fc_1, \Fc_2,\ldots,\Fc_r \}$ and $k_1 \geq k_0$ such that $z_k \in \Fc_0$ for all $k \geq k_1$. The last assertion of Lemma \ref{TsengLemma} implies that
\begin{equation}
	\phi (z_k) = c,  \quad \forall k \geq k_1. \label{V-1}
\end{equation}
Note that, since $z_k$ is a KKT point of \eqref{eq:Pq}, we have $\left\langle  Qz_k +q, x_k - z_k\right\rangle \geq 0$. Then,
\begin{align*}
\phi(z_k)-\phi(x_k)&=\frac{1}{2}\langle Qz_k,z_k\rangle -\frac{1}{2}\langle Qx_k,x_k\rangle + \langle q,z_k-x_k\rangle\\
&\leq \frac{1}{2}\langle Qz_k,z_k\rangle -\frac{1}{2}\langle Qx_k,x_k\rangle + \langle Qz_k,x_k-z_k\rangle\\
&= \frac{1}{2}\langle Q(x_k-z_k),z_k-x_k\rangle \\
&\leq \frac{1}{2}\|Q\|\|x_k-z_k\|^2.
\end{align*}
From \Cref{prop:innerloop}\Cref{prop:innerloopI} we know that
$\lim_{k \to \infty}  \phi(x_k) = \bar\phi$. Hence, for all $k \geq k_1$,
\begin{equation}\label{V0}
	c = \phi(z_k)   \leq \phi(x_k) + \frac{1}{2}\| Q \|  \| z_k - x_k\| ^2 \to \bar\phi, \text{ as } k\to\infty.
\end{equation}%

We prove now that $\bar\phi \leq c$. Indeed, on the one hand,
from \eqref{eq:phi_decreasing} and \eqref{V-1},
 we have for all $k \geq k_1$ that
\begin{align}
		\phi(x_{k+1}) - c &\leq \phi(y_{k}) - c = \phi(y_{k}) - \phi (z_k) \nonumber\\
	&= \frac{1}{2} \left\langle Q y_{k}, y_{k}\right\rangle + \left\langle q, y_{k}\right\rangle
	-\frac{1}{2} \left\langle Q z_{k}, z_{k}\right\rangle - \left\langle q, z_{k}\right\rangle \nonumber\\
	&= \left\langle Q z_{k} + q, y_{k} - z_{k}\right\rangle +\frac{1}{2} \left\langle Q (y_{k}-z_k), y_{k}-z_k\right\rangle \nonumber\\
	&\leq \left\langle Q z_{k} + q, y_{k} - z_{k}\right\rangle +\frac{1}{2} \| Q\| \|y_{k}-z_k\|^2. \label{V1}
\end{align}
On the other hand,
since
$y_k = P_{\Fc} \left( x_k - \frac{1}{\sigma} (Qx_k+q) \right)$ and $z_k \in \Fc$, we deduce (see, e.g., \cite[Theorem~3.16]{BC17}) that
$$
\left\langle  x_k - \frac{1}{\sigma} (Qx_k+q) - y_k, y_k - z_k \right\rangle \geq 0.
$$
Therefore,
\begin{align}
 \left\langle Q z_{k} + q, y_{k} - z_{k}\right\rangle   \nonumber
&=  \left\langle Q x_{k} + q, y_{k} - z_{k}\right\rangle  + \langle  Q(z_k- x_k), y_k- z_k\rangle \\\nonumber
&\leq \sigma\langle x_k - y_k , y_k - z_k  \rangle + \|Q\|\|x_k -z_k\|\|y_k - z_k\| \\
 &\leq \left(\sigma \|x_k - y_k\| +\|Q\| \|x_k -z_k\|\right)  \|y_k - z_k\|.
 \label{V2}
\end{align}
Combining \eqref{V1} and \eqref{V2}, we obtain for all $k\geq k_1$ that
\begin{equation}
	\phi(x_{k+1}) - c  \leq \left(\sigma  \|x_k - y_k\|   + \|Q\| \|z_k -x_k\|  + \frac{1}{2} \| Q\| \|y_{k}-z_k\|\right)\|y_k - z_k \|. \label{V3}
\end{equation}
From \eqref{V-2}, we have
$$
\|y_k-z_k\|   \leq \|y_k-x_k\| + \|x_k-z_k\| \leq (1+\tau) \|d_k\|,  \quad  \forall k \geq k_1.
$$
Hence, we deduce from~\eqref{V3} that
\begin{equation}
\phi(x_{k+1}) - c  \leq \beta \|d_k\|^2, \quad  \forall k \geq k_1, \label{V4}
\end{equation}
where $\beta :=  (1+\tau) \left( \sigma+ \|Q\| \tau  + \frac{1+\tau}{2} \|Q\| \right) $.
Passing \eqref{V4} to the limit as $k \to \infty$, we get
$$
\bar\phi = \lim_{k \to \infty} \phi(x_{k+1}) \leq c.
$$
The latter together with \eqref{V0} imply that $\bar\phi = c$. Therefore, it follows from \eqref{V4} and \eqref{eq:phi_decreasing} that
\begin{equation*}
\phi(x_{k+1}) - \bar\phi  \leq \beta \|d_k\|^2 \leq  \frac{\beta}{\rho}  \left( \phi(x_k )- \phi(x_{k+1}) \right),   \quad  \forall k \geq k_1, \label{V5}
\end{equation*}
or, equivalently,
\begin{equation*}
\left( 1+   \frac{\beta}{\rho} \right)  \left( \phi(x_{k+1}) - \bar\phi \right)
 \leq   \frac{\beta}{\rho}  \left( \phi(x_k )- \bar\phi \right),   \quad  \forall k \geq k_1. \label{V6}
\end{equation*}
Since $\left\lbrace \phi(x_k ) \right\rbrace $ is monotonically decreasing to $\bar\phi$, we deduce from the last inequality that
\begin{align}
\phi(x_{k}) - \phi(x_{k+1})  & = (\phi(x_{k})-\bar\phi)  +(\bar\phi-\phi(x_{k+1})) \nonumber\\
&\leq \phi(x_k )- \bar\phi  \leq \left( \phi(x_{k_1})- \bar\phi \right) \left(\frac{\beta}{\rho+\beta}\right)^{k-k_1} \nonumber\\
 &= M_0 \eta^{2k}, \qquad \forall k\geq k_1,\label{V7}
\end{align}
where $M_0:= \left( \phi(x_{k_1})- \bar\phi \right) (\rho+\beta)^{k_1}\beta^{-k_1} $ and $\eta  := \sqrt{\frac{\beta}{\rho + \beta}} \in {]0,1[}$.
Consider now
$$M_1:=\max_{\lambda\geq 0}\frac{(1+\lambda)^2}{\alpha\lambda^2+\rho}=\frac{\alpha+\rho}{\alpha\rho}.$$
Hence, from \eqref{eq:phi_decreasing} and \eqref{V7}, we obtain
\begin{align*}
\|x_{k+1} - x_k\|^2 & = (1+ \lambda_k)^2 \|d_k\|^2\\
&\leq  \frac{(1+ \lambda_k)^2}{\alpha\lambda_k^2+\rho} \left( \phi(x_{k}) - \phi({x_{k+1}})\right)\\
&\leq  M_1M_0  \eta^{2k},  \quad  \forall k \geq k_1,
\end{align*}
which implies
$$
\|x_{k+1} - x_k\| \leq M \eta^{k},  \quad  \forall k \geq k_1,
$$
with $M := \sqrt{M_1M_0}$. 
Then, for all $k\geq k_1$ and $m \geq 1$, we have
\begin{align}
\|x_{k+m} - x_k\|
&\leq  \|x_{k+m} - x_{k+m-1}\|+ \cdots +\|x_{k+1} - x_{k}\|\nonumber \\
&\leq (\eta^{m-1} + \cdots + \eta+1) M \eta^k \leq \frac{M}{1-\eta} \eta^k.\label{V8}
\end{align}
This implies that $\left\lbrace x_k \right\rbrace $ is a Cauchy sequence and hence converges to a point $\overline{x} \in  \Fc$. According to \Cref{prop:innerloop}\Cref{prop:innerloopII} and \Cref{rem:int}, combined with the fact that $\dom h=\RR^n$ is an open set, we must have that $\overline{x}$ is a KKT point of~\eqref{eq:Pq}. Moreover, passing the inequality \eqref{V8} to the limit as $m \to \infty$, we obtain
$$
\|\overline{x} - x_k\| \leq \frac{M}{1-\eta} \eta^k ,\quad\forall k\geq k_1,
$$
which concludes the proof.
\end{proof}

\section{Numerical experiments}\label{sec:application1}
The purpose of this section is to compare the performance of BDCA (\Cref{alg:BDCA}) against the classical DCA. We refrain from comparing the algorithms against other methods, as our only aim here is to show that it is more advantageous to apply BDCA than DCA whenever this is possible, not to show that these methods are the most efficient for the problems that we consider. Consequently, we tested both algorithms in the three different settings that can occur, depending on whether the feasible set is unbounded, bounded with general constraints, and bounded with box constraints (so the feasibility in the line search step can be ensured). We applied both algorithms for testing copositivity~\cite{Durr} (unbounded) and for solving the $\ell_1$ and $\ell_\infty$ trust-region subproblem~\cite{Conn, Gratton2008} (bounded, with box constraints in the case of $\ell_\infty$).   In our last experiment we test the performance on piecewise quadratic problems with box constraints, whose objective function is thus nonsmooth. All these problems clearly satisfy Assumptions~\ref{as:A1}-\ref{as:A3} and admit a DC decomposition with $g$ being a quadratic function. Thus, subproblem $(\mathcal{P}_k)$ can be solved by computing a projection onto the feasible set $\mathcal{F}$, analogous to that stated in equation \eqref{DCA}. The projector onto the feasible set in our applications can be directly computed.

All the codes were written in Python 3.7 and the tests were run on an Intel Core i7-4770 CPU 3.40GHz with 32GB RAM, under Windows 10 (64-bit).

\subsection{Testing copositivity}
Recall that a given $n \times n $ matrix $A$ is said to be \emph{copositive} if
$$\langle Ax, x\rangle\geq 0,\quad \text{for all } x\in\RR^n_{+},$$
where $\mathbb{R}^n_{+}$ stands for the non-negative orthant. Copositivity has recently attracted considerable attention in mathematical optimization, see e.g.~\cite{Bomze,Burer,Durr,Nie}.
The problem of determining whether a given matrix is not copositive is known to be NP-complete~\cite{Murty} and it can be recast as the following non-convex optimization problem
\begin{equation} \label {problemDefine}
\min_{x \in \mathbb{R}^n_{+}} \; \phi(x):= \langle Ax, x\rangle.
\end{equation}
The copositivity of $A$ is now  equivalent to $\min_{x \in \mathbb{R}^n_{+}} \, \phi(x) = 0$. In \cite{Durr}, the authors reformulated \eqref{problemDefine} as a DC problem according to the decomposition in \eqref{eq:Pq_decomposition}-\eqref{eq:Pq_decomposition_gh}, and applied DCA as a heuristic for testing whether a matrix is not copositive. To be more specific, given $\sigma> \max\left\lbrace \lambda_{\max}(A),0\right\rbrace $, the reformulation of problem \eqref{problemDefine} as a DC problem becomes
\begin{equation*} \label {problemDC}
\left\{\begin{array}{ll}
\displaystyle\min_{x\in\RR^n}& g(x) - h(x)=\phi(x) \vspace{1ex}\\
\text{s.t.} &x_i\geq 0,~i=1,\ldots,n,
\end{array}\right.
\end{equation*}
with
\begin{equation*}
g(x):= \frac{\sigma}{2}\|x\|^2 \quad \text{and}\quad h(x):=\frac{1}{2}\langle\left(\sigma I-A\right)x,x\rangle.
\end{equation*}
Under this decomposition, DCA is applied as a heuristic to determine the copositivity of a given matrix as follows: if at some iterate $\phi(x_k)<0$, then the matrix is non-copositive; otherwise, if a critical point is reached, the instance is undecidable.

Copositive matrices play an important role in graph theory. The size of the largest complete subgraph contained in a given graph $G$, denoted by $\gamma(G)$, is known as the \emph{clique number} of $G$. If $A$ and $E$ are the adjacency matrix of $G$ and the matrix of all ones, respectively, it can be shown (see~\cite[Corollary~2.4]{KP02}) that
\begin{equation*}
\gamma(G)=\min \left\{ \mu : \mu(E-A)-E \text{ is copositive}\right\}.
\end{equation*}
Therefore, the matrix $\mu(E-A)-E$ will be copositive if $\mu\geq \gamma(G)$ and non-copositive otherwise. Furthermore, in the latter case, the matrix will be closer to the copositive cone as $\mu$ approaches $\gamma(G)$ from the left.

In our tests we considered matrices constructed as follows. Let $G$ be the cycle graph of $n$ nodes whose adjacency matrix, $A_{\text{cycle}}=(a_{ij})\in\RR^{n\times n}$, is given component-wise by
\begin{equation*}\label{eq:Acycle}
a_{ij}:=\left\{\begin{array}{rl}
1, & \text{if } \lvert i-j\rvert\in\{1,n-1\},\\
0, & \text{otherwise.}\end{array}\right. 
\end{equation*}
Its clique number is clearly $\gamma(G)=2$. Hence, the matrix
\begin{equation}\label{eq:Qn}
Q_n^{\mu}:=\mu(E-A_{\text{cycle}})-E\in\RR^{n\times n}
\end{equation}
is copositive for all $\mu\geq 2$ and non-copositive for $\mu<2$. In fact, when $\mu=2$ it coincides with the so-called Horn matrix $H_n$ (see, e.g., \cite[Section~4]{JRcopos}). For instance, the Horn matrix $H_5$ takes the form
\begin{equation*}
H_5:=Q_5^2=\left(\begin{array}{rrrrr}
 1 & -1 &  1 &  1 & -1\\
-1 &  1 & -1 &  1 &  1\\
 1 & -1 &  1 & -1 &  1\\
 1 &  1 & -1 &  1 & -1\\
-1 &  1 &  1 & -1 &  1\\
\end{array}\right).
\end{equation*}

\paragraph{Experiments}

In our numerical tests we used the parameter setting as
$$\alpha:=0.01 \quad \text{and}\quad  \beta:=0.1.$$
The trial step-size $\overline{\lambda}_k$ in the boosting step of BDCA (Line 8 of \Cref{alg:BDCA}) was chosen to be self-adaptive as in~\cite{nBDCA}. This technique proceeds as follows. At the first iteration, choose any $\overline{\lambda}_0>0$. Then, for $k\geq 1$, if the line search has never been used, we take $\overline{\lambda}_k=\overline{\lambda}_0$. Otherwise, if the two previous trial step-sizes have been directly accepted (without being reduced by the  backtracking step), then the last accepted positive $\lambda$ is scaled by a factor of $\gamma>1$ and used as the current trial step-size. If that is not the case, the trial step-size is set as the last positive value of $\lambda$ accepted in previous iterations.  In our tests we used $$\overline{\lambda}_0:=1 \quad\text{and}\quad\gamma:=2.$$

In our first numerical experiment, we considered Horn matrices of different sizes, $H_n$, for $n\in\{1000,1250,\ldots,5000\}$. For each size, DCA and BDCA were run from the same $100$ starting points randomly generated in the intersection of the non-negative orthant with the unit ball. We stopped the algorithms when $\norm{d_k} \leq 10^{-9}$ for the first time.
The results are shown in~\Cref{fig:ExpHorn}, where we can observe that, on average, BDCA was more than 15 times faster than DCA for all sizes. As expected, since Horn matrices are copositive, both algorithms converged to critical points with a positive objective value very close to 0. It is worth to mention that the objective function at the points found by BDCA was usually smaller than at the ones found by DCA.
We also show in \Cref{fig:ExpQHorn_count} the percentage of iterations at which the boosting step {was} activated, that is, when condition in line 7 of \Cref{alg:BDCA} {was} fulfilled. We observed that, for all sizes, the linesearch {was} performed in around the 45\% of the iterations.
In \Cref{fig:HornInst} we show the behavior of both algorithms in a particular instance for testing the copositivity of $H_{1000}$.

\begin{figure}[ht!]
\centering
\includegraphics[width=0.7\textwidth]{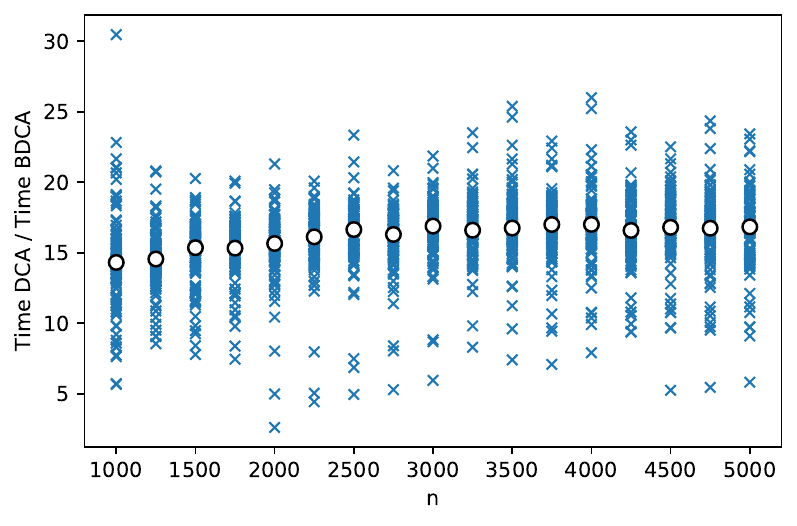}

\vspace{-2ex}
\caption{Comparison between DCA and BDCA for checking the copositivity of Horn matrices of order $n\in\{1000,1250,\ldots,5000\}$. For each size, we represent the ratios of the running time between DCA and BDCA for $100$ random starting points (blue crosses) and the median ratio among all of them (white circle).}\label{fig:ExpHorn}
\end{figure}

\begin{figure}[ht!]
\centering
{\includegraphics[width=0.6\textwidth]{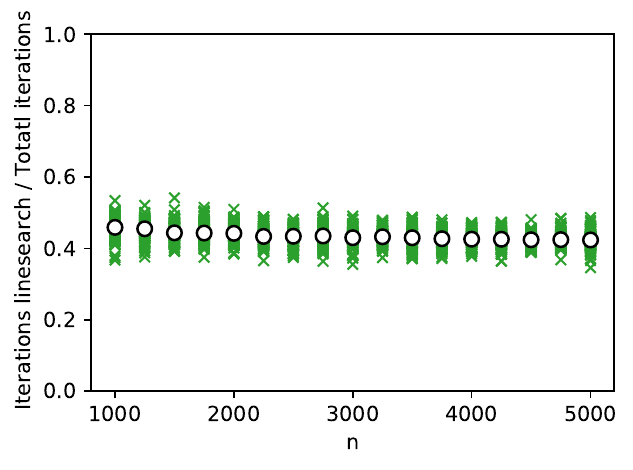}}
\vspace{-2ex}
\caption{Ratio of the number of iterations at which the boosting step of BDCA {was} activated with respect the number of iterations performed for testing the copositivity of Horn matrices of order $n\in\{1000,1250,\ldots,5000\}$. For each size, we represent this ratio for $100$ random starting points (green crosses) and the median ratio among all of them (white circle).}\label{fig:ExpQHorn_count}
\end{figure}

\begin{figure}[ht!]
\centering
\includegraphics[width=0.8\textwidth]{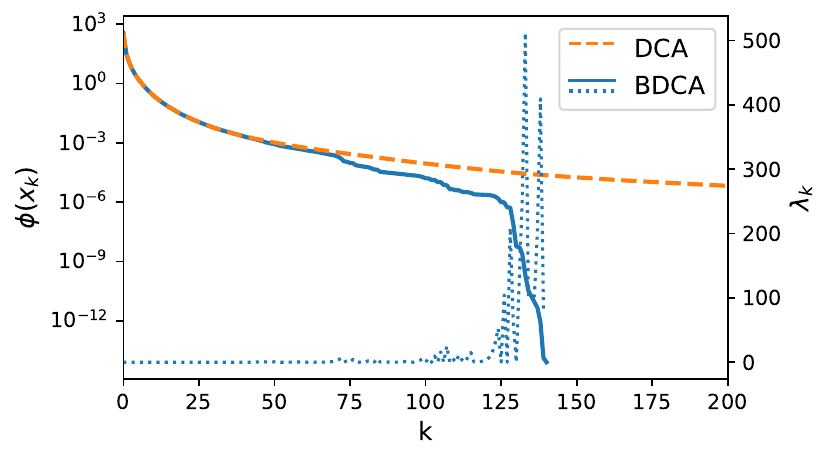}

\vspace{-2ex}
\caption{Value of the objective function of DCA and BDCA (using logarithmic scale in the left axis) as well as the step-size used in BDCA (right axis, dotted blue line), with respect to the iteration, for checking the copositivity of the Horn matrix of order $n=1000$ from the same random starting point.}\label{fig:HornInst}
\end{figure}

In our second experiment we considered matrices of the form $Q_n^\mu$ as defined in~\eqref{eq:Qn}. In order to generate hard instances (those which are close to be copositive)  we took $\mu:=1.9$. For each size $n\in\{1000, 1250,\ldots, 5000\}$, DCA and BDCA were run from the same $100$ random starting points generated as in our previous experiment. In this case, we let the algorithms run until they find a negative objective value (which exists because of the non-copositivity of the matrices). We used two stopping criteria, whose results are depicted in~\Cref{fig:ExpQHorn}: on the left, the algorithms were stopped when any negative objective value was found; on the right, the objective value was required to be smaller than $-10^{-4}$. We do not show any results on the second criterion for $n$ greater than $2000$ because DCA becomes extremely slow (for $n=2000$, the instances solved by DCA required more than 5 minutes on average). This time the advantage of BDCA with respect to DCA increased with the size~$n$, and was significantly greater than in the previous experiment when the second criterion was used.

\begin{figure}[ht!]
\centering
\subfigure[Stopping criterion: $\phi(x)<0$]{\includegraphics[width=0.495\textwidth]{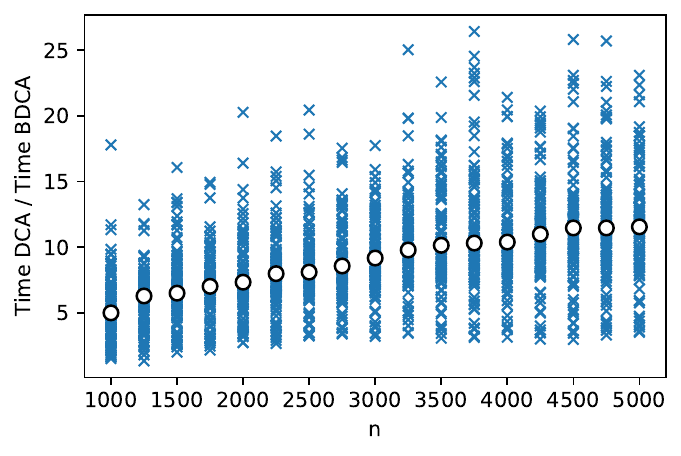}}
\subfigure[Stopping criterion: $\phi(x)<-10^{-4}$]{\includegraphics[width=0.495\textwidth]{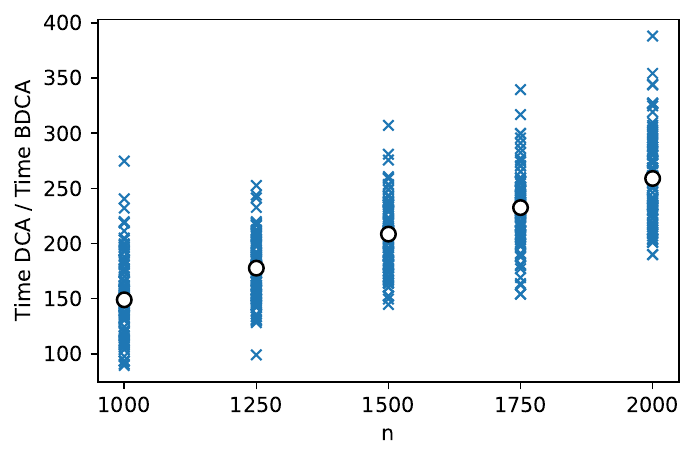}}
\vspace{-2ex}
\caption{Comparison between DCA and BDCA for detecting the non-copositivity of matrices of various orders $n$. For each size, we represent the ratios of the running time  between DCA and BDCA for $100$ random starting points (blue crosses) and the median ratio among all of them (white circle).}\label{fig:ExpQHorn}
\end{figure}

\begin{remark}
Clearly, BDCA iterations will be more time consuming than those of DCA due to the need of checking condition in Line 7 and the linesearch procedure. Note that the linesearch requires of function evaluations of $f$, which may be expensive at some applications. For this reason, in our experiments we compare the total CPU running time of the algorithm, as it includes the possibly wasted extra time of the boosting step.
\end{remark}

\subsection{Solving the $\ell_1$ and $\ell_\infty$ trust-region subproblem}\label{sec:trsp}
The trust-region subproblem {(TRSP)} arises in trust-region methods, which are optimization algorithms that consist in replacing the original objective function by a model which is a good approximation of the original function at the current iterate. The models are usually defined to be a quadratic function, in which case the task consists in solving nonconvex quadratic optimization problems of the type
$$\min_{\vertiii{x}\leq r}\frac{1}{2}\langle Ax, x\rangle+\langle b,x\rangle,$$
where $A$ is an $n\times n$ real symmetric matrix, $b\in\RR^n$, $r$ is a positive number and $\vertiii{\cdot}$ is any given norm. Of course, the choice of the norm has a serious impact on the difficulty for solving the subproblems.

The application of the DCA for solving the Euclidean trust-region subproblem (the most common choice for the norm) was first proposed in~\cite{TT98} and its convergence was further studied in~\cite{An2012, Tuan2012, Tuan2013}. Thanks to the structure of the trust-region subproblem, the implementation of the DCA is very simple and efficient, as the iterations are given by~\eqref{DCA} and only require matrix-vector products. On the other hand, as observed in~\cite[Section~7.8]{Conn} and~\cite{Gratton2008}, the choice of the Euclidean norm for the definition of the trust-region has several drawbacks, especially when the problem at hand is box-constrained, in which case the intersection of the Euclidean ball of the trust region with the feasible set has a complicated structure. The choice of the $\ell_\infty$ norm $\|\cdot\|_\infty$ for the trust-region when the problem involves bounds of the type $l\leq x\leq u$ permits to ensure feasibility by simply requiring similar bounds on the trust-region subproblem. Unfortunately, the  $\ell_1$ and $\ell_\infty$ trust-region subproblem are NP-hard~\cite{Murty}, a class of problems for which it is commonly believed that there are no  polynomial time algorithms.

In this subsection we compare the performance of DCA and BDCA on the challenging $\ell_1$ and $\ell_\infty$ trust-region subproblems. To this aim, we consider the DC decomposition~\eqref{eq:Pq_decomposition}-\eqref{eq:Pq_decomposition_gh} of the (possibly) nonconvex part of the objective function. Thus, given $\sigma> \max\left\lbrace \lambda_{\max}(A),0\right\rbrace$, the subproblems for the $\ell_1$ norm take the form
\begin{equation}\tag{$\mathcal{P}_1$}\label{eq:P1}
\left\{\begin{array}{ll}
\displaystyle\min_{x\in\RR^n}& g(x) - h(x)=\phi(x) \vspace{1ex}\\
\text{s.t.} &\sum_{i=1}^n\lvert x_i\rvert\leq r,
\end{array}\right.
\end{equation}
with
\begin{equation*}
g(x):= \frac{\sigma}{2}\|x\|^2+\langle b, x\rangle \quad \text{and}\quad h(x):=\frac{1}{2}\langle\left(\sigma I-A\right)x,x\rangle,
\end{equation*}
while for the $\ell_\infty$ norm these problems are
\begin{equation}\tag{$\mathcal{P}_\infty$}\label{eq:Pinf}
\left\{\begin{array}{ll}
\displaystyle\min_{x\in\RR^n}& g(x) - h(x)=\phi(x) \vspace{1ex}\\
\text{s.t.} &-r\leq x_i\leq r,~i=1,\ldots,n.
\end{array}\right.
\end{equation}
Observe that the feasible set of~\eqref{eq:P1} can be equivalently written in terms of $2^n$ linear constraints of the type $\pm x_1\pm x_2\pm\cdots\pm x_n\leq r$, so the problem is a particular instance of~\eqref{eq:P}. In principle, as mentioned in \Cref{rem:maxlambda}, the largest step-size derived in~\cite{FSS21} ensuring feasibility could be computed. However, as the feasible set has $2^n$ linear constraints, computing~\eqref{lambda_bound} would be very time-consuming.  Nonetheless, given a point $y_k$ and a feasible direction $d_k$, an upper bound of the maximum value of the step-size $\overline{\lambda}_k$ for~\eqref{eq:P1} is given by the Euclidean norm, since
$$\left\|y_k+\lambda d_k\right\|_1\geq\left\|y_k+\lambda d_k\right\|> r,\text{ for all }\lambda>\widehat{\lambda}_k,$$
where $$\widehat{\lambda}_k:=\frac{\langle y_k,d_k\rangle+\sqrt{\langle y_k,d_k\rangle^2-\|d_k\|^2
(\|y_k\|^2-r^2)}}{\|y_k\|^2},$$
so one must always choose $\overline{\lambda}_k\leq\widehat{\lambda}_k$ to avoid extra time in checking feasibility. On the other hand, for the $\ell_\infty$ norm, the situation is more favorable, as feasibility can be guaranteed whenever
$$0\leq\lambda\leq\min_{i\not\in I(x_k)}\left\{\frac{r}{\lvert d_{k,i}\rvert}-\frac{y_{k,i}}{d_{k,i}}\;\bigg\vert\; d_{k,i}\neq 0\right\},$$
which coincides with~\eqref{lambda_bound}, so no time will be wasted in Step~8 of Algorithm~\ref{alg:BDCA} if one takes $\overline{\lambda}_k$ satisfying the latter inequalities.

\paragraph{Experiments}

We used the same parameter setting as in the previous section for \cref{alg:BDCA}, except for the value of $\gamma$, which was increased to $20$. The matrix $A$ and the vector $b$ were generated with coordinates randomly and uniformly chosen in~${]-1,1[}$, while the value of $r$ was randomly and uniformly chosen in the range ${]0,\sqrt{n}/4[}$ for~\eqref{eq:P1} and ${]0,1/4[}$ for~\eqref{eq:Pinf}. For each size $n\in\{1000,1500,\ldots,5000\}$, DCA and BDCA were run from the same $100$ starting points randomly picked inside the trust-region. We stopped the algorithms when $\norm{d_k}/\|x_k\| \leq 10^{-8}$ for the first time. The time comparison for both norms are summarized in~\Cref{fig:TRSP}. BDCA was consistently faster than DCA. On average, it was 3.8 times faster for solving~\eqref{eq:P1} and 3.65 times faster for~\eqref{eq:Pinf}.

\begin{figure}[ht!]
\centering
\includegraphics[width=0.49\textwidth]{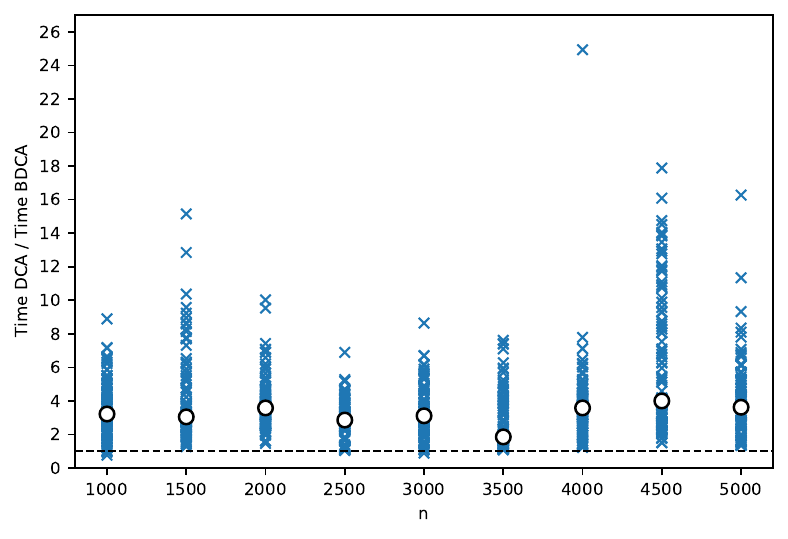}
\includegraphics[width=0.49\textwidth]{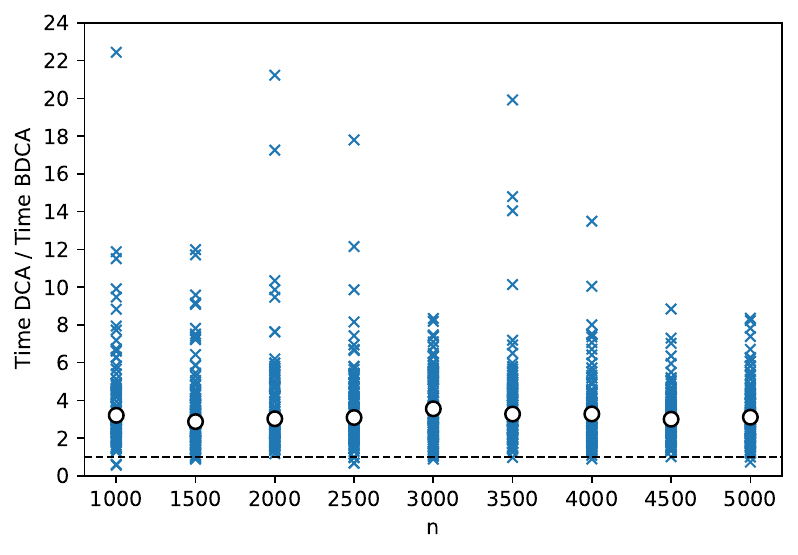}
\caption{Comparison between DCA and BDCA for solving randomly generated $\ell_1$ (left) and $\ell_\infty$ (right) trust-region subproblems in $\RR^n$, with $n\in\{1000,1250,\ldots,5000\}$. For each size, we represent the ratios of the running time between DCA and BDCA for $100$ random starting points (blue crosses) and the median ratio among all of them (white circle).}\label{fig:TRSP}
\end{figure}

{We also show in \Cref{fig:TRSP_count} the percentage of iterations at which the boosting step was activated. We observe that, for all sizes, the linesearch was performed on average in around 80\% of the iterations for the case of the $\ell_1$ norm, and 40\% for the $\ell_\infty$ norm.}

\begin{figure}[ht!]
\centering
\includegraphics[width=0.49\textwidth]{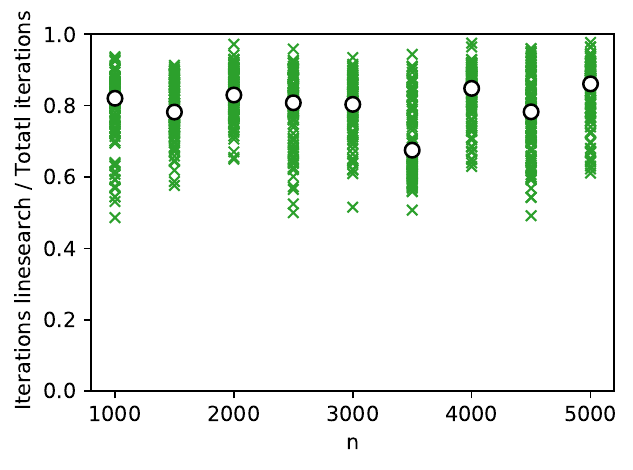}
\includegraphics[width=0.49\textwidth]{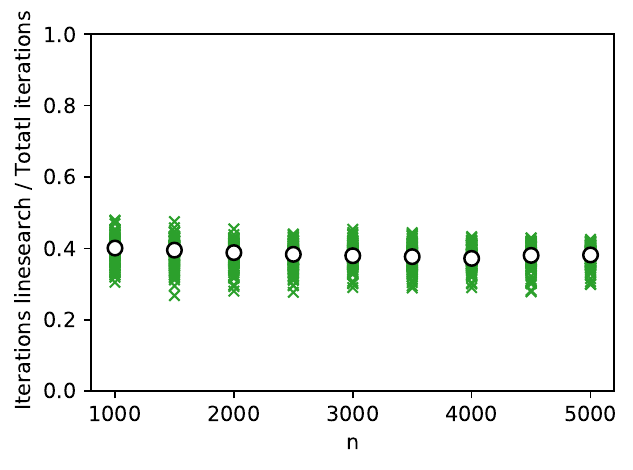}
\caption{Ratio of the number of iterations at which the boosting step of BDCA was activated with respect the number of iterations performed for solving randomly generated $\ell_1$ (left) and $\ell_\infty$ (right) trust-region subproblems in $\RR^n$, with $n\in\{1000,1250,\ldots,5000\}$. For each size, we represent this ratio for $100$ random starting points (green crosses) and the median ratio among all of them (white circle).}\label{fig:TRSP_count}
\end{figure}

When applied to~\eqref{eq:P1}, both algorithms obtained the same value in 875 instances, BDCA reached a smaller objective value in 18 instances, while DCA did so in 7. When applied to~\eqref{eq:Pinf}, BDCA obtained a smaller objective value in 461 instances, while DCA did so in 417 (the value was the same in 22 instances). In \Cref{fig:TRSPInst} we show the behavior of both algorithms for a particular instance with $n=1000$, which was chosen so that both algorithms attained the same objective value.

\begin{figure}[ht!]
\centering
\includegraphics[width=0.9\textwidth]{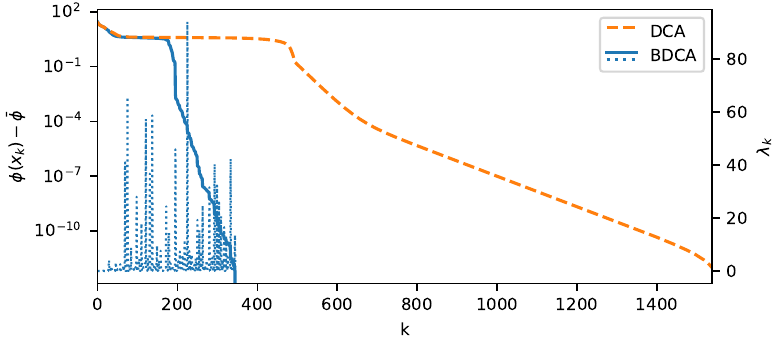}
\includegraphics[width=0.9\textwidth]{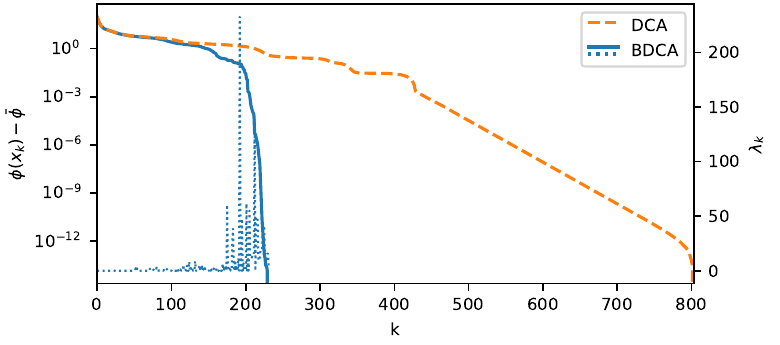}
\caption{Value of the objective function of DCA and BDCA (using logarithmic scale in the left axis) as well as the step-size used in BDCA (right axis, dotted blue line), with respect to the iteration, for solving a randomly generated $\ell_1$ (top) and $\ell_\infty$ (bottom) trust-region subproblem in $\RR^{1000}$ from the same random starting point. Both algorithms attained the same objective value $\bar\phi$.}\label{fig:TRSPInst}
\end{figure}

We conclude this section with some comments about the rate of convergence of DCA and BDCA. According to \Cref{linearRate}, both methods are R-linearly convergent, so by~\eqref{eq:Rlinear},
$$\lim_{k\to\infty}\|x_k-\overline{x}\|^{\frac{1}{k}}\leq \eta,$$
for some $\eta\in{]0,1[}$. In Figure~\ref{fig:TRSPInst_convergence},  for a particular random instance, we have represented the sequence $\{\|x_k-\overline{x}\|^{1/k}\}$, which was computed for both algorithms after obtaining the limit point $\overline{x}$ with a higher precision. We observe that the value of $\eta$ is very close to $1$ for DCA, while line-searches clearly helped improving this slow rate. A possible explanation about the bad R-linear convergence rate of DCA can be deduced from the proof of \Cref{linearRate}. The upper bound obtained there is $\frac{\beta}{\rho+\beta}$, with $\beta>\sigma$. When $\lambda_{\max}(Q)>0$, Assumption~\ref{as:A1} holds for $\rho$ as in~\eqref{eq:rho}, which would then be equal to $\sigma-\lambda_{\max}(Q)$. Therefore,
$$\frac{\beta}{\rho+\beta}=\frac{\beta}{\sigma-\lambda_{\max}(Q)+\beta}>\frac{\sigma}{2\sigma-\lambda_{\max}(Q)},$$
where the last inequality holds since the left term is an increasing function with respect to $\beta$.
In our numerical tests we took $\sigma-\lambda_{\max}(Q)=0.01$ (we numerically observed how larger values slowed down both DCA and BDCA). In the random instances shown in \Cref{fig:TRSPInst_convergence} $\lambda_{\max}(Q)$ was equal to $25.77$ for the $\ell_1$ norm and $25.89$ for the $\ell_\infty$ norm, which would give upper bounds on the R-linear convergence rate of $0.9996$ for both problems. According to the figures, these bounds seem to be tight, particularly for the $\ell_\infty$ norm.

\begin{figure}[ht!]
\centering
\includegraphics[width=0.8\textwidth]{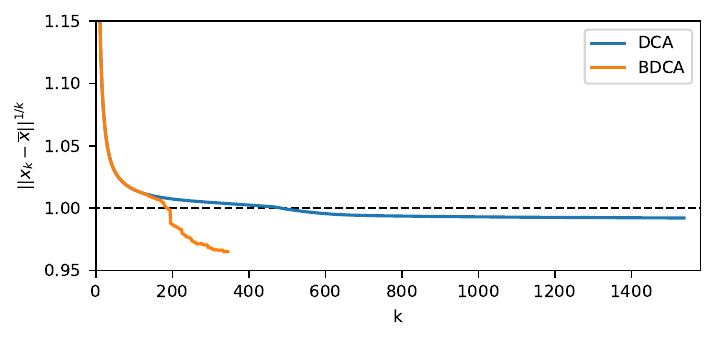}
\includegraphics[width=0.8\textwidth]{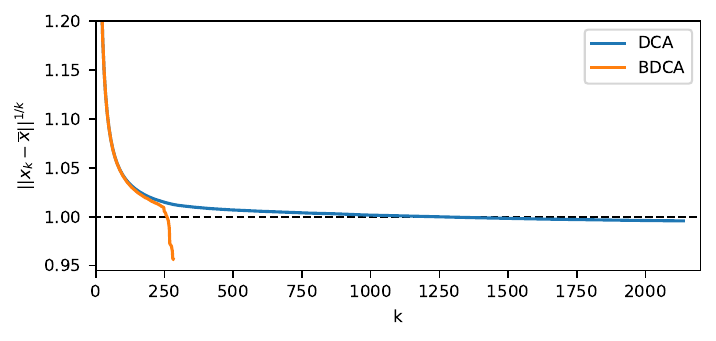}
\caption{Comparison on the rate of R-linear convergence of DCA and BDCA for solving the same randomly generated $\ell_1$ (top) and $\ell_\infty$ (bottom) trust-region subproblems in $\RR^{1000}$ from Figure~\ref{fig:TRSPInst}.}\label{fig:TRSPInst_convergence}
\end{figure}

\subsection{Piecewise quadratic problems with box constraints}

Finally, we test BDCA on optimization problems with nonsmooth objective function. To this aim, we consider piecewise quadratic problems with linear constraints of the form
\begin{equation}\label{pquadprob}
\left\{\begin{array}{ll}
\displaystyle\min_{x\in\RR^n}& \phi(x):=\min_{j\in\Set{1,\ldots,m}}\Set{ \frac{1}{2}\|x-c^j\|^2} \vspace{1ex}\\
\text{s.t.} &l_i\leq x_i\leq u_i,~i=1,\ldots,n;
\end{array}\right.
\end{equation}
for given $l=(l_1,\ldots,l_n), u=(u_1,\ldots,u_n)\in\RR^n$ and $c^1,\ldots,c^m\in\RR^n$.

As discussed in \cite[\S~6.3]{Oliv10}, the objective function of {this} problem admits the DC decomposition $\phi(x)=g(x)-h(x)$ with
\begin{equation*}
g(x):= \frac{1}{2}\sum_{j=1}^m \|x-c^j\|^2 \quad \text{and}\quad h(x):=\max_{l\in\Set{1,\ldots,m}}\Set{\frac{1}{2}\sum_{j=1\atop j\neq l}^m\|x-c^j\|^2}.
\end{equation*}

\paragraph{Experiments}

For our numerical tests, we generated problems of the form \eqref{pquadprob} as follows. First, the vector $l\in\RR^n$ was generated with coordinates randomly chosen in~${]-5,5[}${. Then, we randomly generated a vector $e\in\RR^n$ with coordinates in ${]0,5[}$ and we set $u:=l+e$. Finally,} the points $c_1,\ldots,c_m\in\RR^n$ were generated so that they became unfeasible for all constraints {as follows: each component $i\in\{1,\ldots,n\}$ of each of these vectors was randomly picked in one of the intervals ${]l_i-10,l_i[}$ or ${]u_i,u_i+10[}$}. In our experiment, for each $n\in\{100,150,\ldots,1000\}$ and each $m\in\{100,150,\ldots,1000\}$, DCA and BDCA were run from the same $100$ starting points randomly picked inside the feasible set. We used the same parameter setting and stopping criterion for the algorithms as in the previous TRSP experiments. The time comparison is shown in~\Cref{fig:Pwq_grid}, where we represent the median ration between DCA and BDCA among the 100 instances. Detailed results for all 100 runs are shown in \Cref{fig:PwqN} for fixed $n=500$ and \Cref{fig:PwqM} for fixed $m=500$. From the results of this comparison we infer that the superiority of BDCA increases as $m$ does, while it stabilizes as $n$ increases. {As in previous experiments, we show in \Cref{fig:Pwq_count} the percentage of iterations at which the boosting step was activated.} The objective value attained by DCA and BDCA were the same in all instances.

\begin{figure}[ht!]
\centering
{\includegraphics[width=0.68\textwidth]{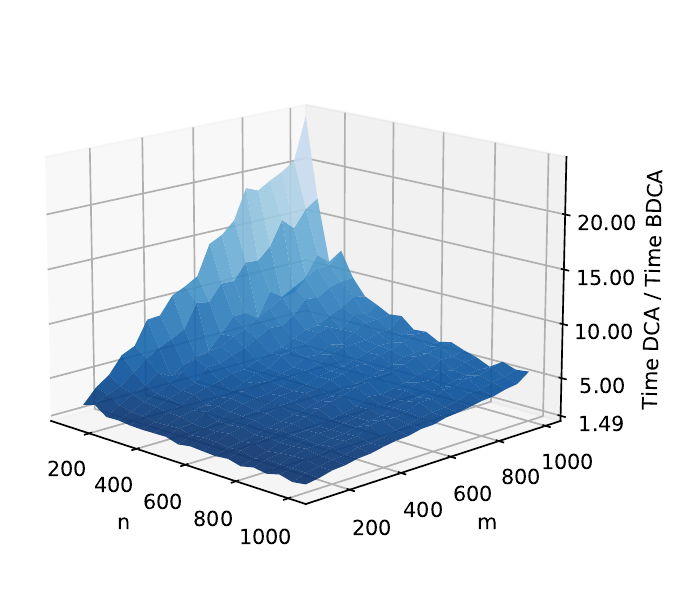}}
\vspace{-2ex}
\caption{Comparison between DCA and BDCA for solving randomly generated piecewise quadratic problems with $m$ pieces in $\RR^n$, for $n\in\{100,200,\ldots,1000\}$ and $m\in\{100,200,\ldots,1000\}$. For each pair $(n,m)$, we represent the median of the ratios of the running time between DCA and BDCA for $100$ random starting points.}\label{fig:Pwq_grid}
\end{figure}

\begin{figure}[ht!]
\centering
\subfigure[$n\in\Set{100,200,\ldots,1000}$ and $m=500$ \label{fig:PwqN}]{\includegraphics[width=0.47\textwidth]{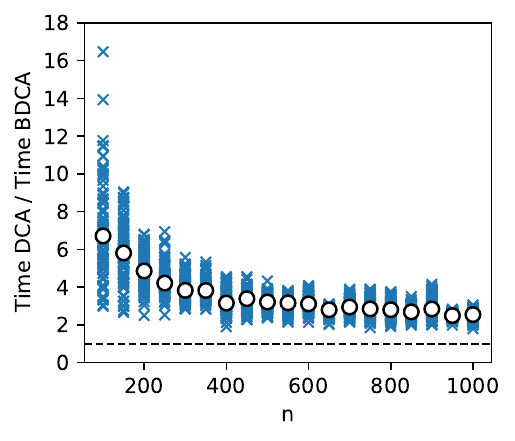}}%
\subfigure[$n=500$ and $m\in\Set{100,200,\ldots,1000}$ \label{fig:PwqM}]{\includegraphics[width=0.47\textwidth]{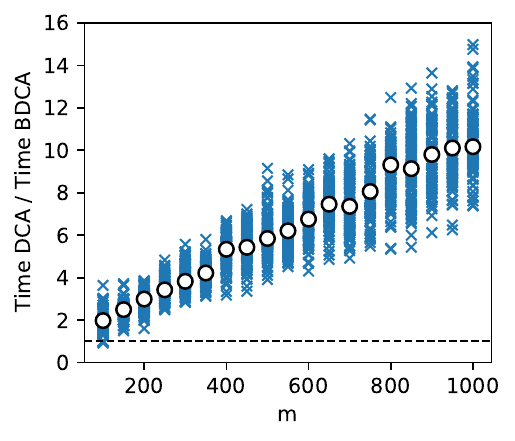}}
\caption{Comparison between DCA and BDCA for solving randomly generated piecewise quadratic problems with $m$ pieces in $\RR^n$, for $n\in\{100,200,\ldots,1000\}$ and $m=500$ (left), and for $n=500$ and $m\in\{100,200,\ldots,1000\}$ (right). For each problem, we represent the ratios of the running time between DCA and BDCA for $100$ random starting points (blue crosses) and the median ratio among all of them (white circle).}\label{fig:Pwq}
\end{figure}

\begin{figure}[ht!]
\centering
\subfigure[$n\in\Set{100,200,\ldots,1000}$ and $m=500$ \label{fig:PwqN_count}]{\includegraphics[width=0.47\textwidth]{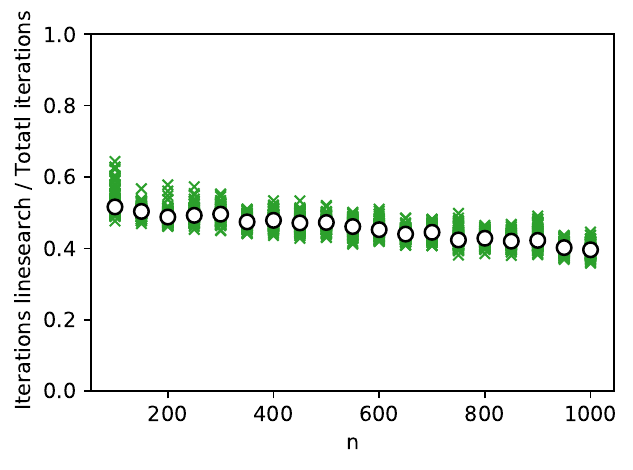}}%
\subfigure[$n=500$ and $m\in\Set{100,200,\ldots,1000}$ \label{fig:PwqM_count}]{\includegraphics[width=0.47\textwidth]{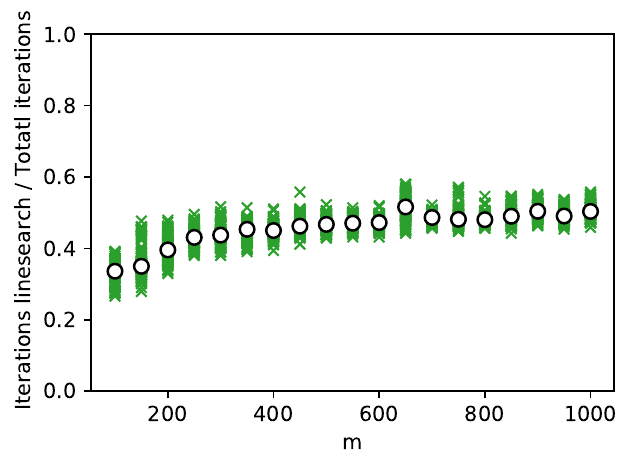}}
\caption{Ratio of the number of iterations at which the boosting step of BDCA was activated with respect the number of iterations performed for solving randomly generated piecewise quadratic problems with $m$ pieces in $\RR^n$, for $n\in\{100,200,\ldots,1000\}$ and $m=500$ (left), and for $n=500$ and $m\in\{100,200,\ldots,1000\}$ (right). For each size, we represent this ratio for $100$ random starting points (green crosses) and the median ratio among all of them (white circle).}\label{fig:Pwq_count}
\end{figure}

\section{Concluding remarks}\label{sec:conclusions}
We have extended the Boosted DC Algorithm for solving linearly constrained DC programming. The algorithm is proved to provide KKT points of the constrained problem. In addition, we have shown why this approach cannot be extended to more general convex constraints. The theoretical results were confirmed by some numerical experiments for testing the copositivity of matrices and for solving $\ell_1$ and $\ell_\infty$ trust-region subproblems. For copositive matrices, BDCA was on average fifteen times faster than DCA; for non-copositive ones, this advantage was much more superior; and for trust-region subproblems, BDCA was more than three times faster than DCA. We also considered piecewise quadratic problems with box constraints, which thus have nonsmooth objective functions. We observed again that BDCA was faster than DCA and, further, that the advantage {was more noticeable as the number of pieces of the objective function increased}. Future research includes investigation of alternative approaches to derive a Boosted DCA that permits to address any type of constrained DC programs. It would also be interesting to combine BDCA with the inertial technique employed in~\cite{OT19}.

\paragraph{Acknowledgments}
FJAA and RC were partially supported by the Ministry of Science, Innovation and Universities of Spain and the European Regional Development Fund (ERDF) of the European Commission (PGC2018-097960-B-C22), and by the Generalitat Valenciana (AICO/2021/165).
PTV was supported by Vietnam Ministry of Education and Training Project hosting by the University of Technology and Education, Ho Chi Minh City Vietnam (2023-2024). %

\bibliographystyle{plain}

\end{document}